\newtheorem{theorem}{Theorem}[section]
\newtheorem{lemma}[theorem]{Lemma}
\newtheorem{proposition}[theorem]{Proposition}
\newtheorem{claim}[theorem]{Claim}
\theoremstyle{definition}
\theoremstyle{remark}
\numberwithin{equation}{section}
\newcommand{\wo}{\mathbb{E}}
\newcommand{\wwo}[2]{\mathbb{E}\left[\left.{#1}\right|{#2}\right]}
\newcommand{\rzecz}{\mathbb{R}}
\newcommand{\zesp}{\mathbb{C}}
\DeclareMathOperator{\Var}{Var}                                                 
\newcommand{\wVar}[2]{\Var\left[\left.{#1}\right|{#2}\right]}                   
\DeclareMathOperator{\supp}{supp}                                               
\DeclareMathOperator{\MojeIm}{Im}                                               
\newcommand{\Fsu}{\mathcal{F}_{s,u}}
\newcommand{\Fs}{\mathcal{F}_{s}}
\newcommand{\Ft}{\mathcal{F}_{t}}
\newcommand{\yxts}{y;x,t,s}
\newcommand{\xx}{\mbox{\fontfamily{phv}\selectfont x}}
\newcommand{\xxx}{\emph{\xx}}
\newcommand{\yy}{\mbox{\fontfamily{phv}\selectfont y}}
\newcommand{\yyy}{\emph{\yy}}
\newcommand{\XX}{\mbox{\fontfamily{phv}\selectfont X}}
\newcommand{\XXX}{\emph{\XX}}
\newcommand{\DD}{\mbox{\fontfamily{phv}\selectfont D}}
\newcommand{\DDD}{\emph{\DD}}
\newcommand{\ZZ}{\mbox{\fontfamily{phv}\selectfont Z}}
\newcommand{\ZZZ}{\emph{\ZZ}}
\newcommand{\I}{\mbox{\fontfamily{phv}\selectfont I}}
\begin{document}

\title[Free Harness]{Free Quadratic Harness}

\author[W.~Bryc]{W\l odzimierz Bryc}
\address{Department of Mathematical Sciences\\
University of Cincinnati\\
Cincinnati, OH 45221-0025\\
USA} \email{Wlodzimierz.Bryc@UC.edu}

\author[W.~Matysiak]{Wojciech Matysiak}
\address{Wydzia{\l} Matematyki i Nauk Informacyjnych\\
Politechnika Warszawska\\
Pl. Politechniki 1\\
00-661 Warszawa, Poland}
\email{matysiak@mini.pw.edu.pl}

\author[J.~Weso\l owski]{Jacek Weso\l owski}
\address{Wydzia{\l} Matematyki i Nauk Informacyjnych\\
Politechnika Warszawska\\
Pl. Politechniki 1\\
00-661 Warszawa, Poland} \email{wesolo@mini.pw.edu.pl}
\thanks{}

\subjclass[2000]{Primary: 60J25; Secondary: 46L53}

\keywords{Quadratic conditional variances, harnesses, orthogonal martingale polynomials, free L\'{e}vy processes,
hypergeometric orthogonal polynomials.}

\date{}

\dedicatory{}

\begin{abstract}
Free quadratic harness is a Markov process from the class of quadratic harnesses, i.e. processes with linear regressions and quadratic conditional variances. The process has recently been constructed for a restricted range of parameters in \cite{brycwesolo4} using Askey--Wilson polynomials. Here we provide a self-contained construction of the free quadratic harness for all values of parameters. 
\end{abstract}

\maketitle

\section{Introduction}\label{s:intro}

Quadratic harnesses were introduced in \cite{BMW1} as the square-integrable stochastic processes on $[0,\infty)$ such
that for all $t,s\ge0$
\begin{equation}\label{covariances}
\wo[X_t]=0,\ \wo[X_t X_s]=\min(t,s),
\end{equation}
conditional expectations $\wwo{X_t}{\Fsu}$ are linear functions of $X_s$ and $X_u$, and second conditional moments
$\wwo{X_t^2}{\Fsu}$ are quadratic functions of $X_s$ and $X_u$
\begin{equation}\label{quadvar}
\wwo{X_t^2}{\Fsu}=Q_{t,s,u}\left(X_s,X_u\right),
\end{equation}
where
\begin{equation}\label{quadform}
Q_{t,s,u}(x,y)=A_{t,s,u} x^2+B_{t,s,u} xy+C_{t,s,u} y^2+D_{t,s,u} x+E_{t,s,u} y+F_{t,s,u},
\end{equation}
and $A_{t,s,u},\ldots,F_{t,s,u}$ are some deterministic functions of $0<s<t<u$. Here, $\Fsu$ is the two-sided $\sigma$-algebra
generated by $\left\{X_r:\ r\in(0,s]\cup[u,\infty)\right\}$. We will also use the one sided $\sigma$-algebras $\Ft$
generated by $\left\{X_r:\ r\le t\right\}$.

It follows that for all $s<t<u$
\begin{equation}\label{linreg}
\wwo{X_t}{\Fsu}=a_{t,s,u}X_s+b_{t,s,u} X_u,
\end{equation}
with $a_{t,s,u}=(u-t)/(u-s)$ and $b_{t,s,u}=(t-s)/(u-s)$, and, under certain technical assumptions, there exist five
parameters $\eta,\theta\in\rzecz$, $\sigma,\tau\ge0$ and $\gamma\in[-1,1+2\sqrt{\sigma\tau}]$ such that
\begin{equation}\label{AtsuFtsu}
\begin{split}
A_{t,s.u} &= \frac{(u-t)\left[u(1+\sigma t)+\tau-\gamma t\right]}{(u-s)\left[u(1+\sigma s)+\tau-\gamma s\right]},\\
B_{t,s,u} &= \frac{(u-t)(t-s)(1+\gamma)}{(u-s)\left[u(1+\sigma s)+\tau-\gamma s\right]},\\
C_{t,s,u} &= \frac{(t-s)\left[t(1+\sigma s)+\tau-\gamma s\right]}{(u-s)\left[u(1+\sigma s)+\tau-\gamma s\right]},\\
D_{t,s,u} &= \frac{(u-t)(t-s)(u\eta-\theta)}{(u-s)\left[u(1+\sigma s)+\tau-\gamma s\right]},\\
E_{t,s,u} &= \frac{(u-t)(t-s)(\theta-s\eta)}{(u-s)\left[u(1+\sigma s)+\tau-\gamma s\right]},\\
F_{t,s,u} &= \frac{(u-t)(t-s)}{u(1+\sigma s)+\tau-\gamma s},
\end{split}
\end{equation}
and
\begin{multline}\label{condVar}
\wVar{X_t}{\Fsu}\\
= \frac{(u-t)(t-s)}{u(1+\sigma s)+\tau-\gamma s}\left(1+ \sigma
\frac{(uX_s-sX_u)^2}{(u-s)^2}+\eta \frac{uX_s-sX_u}{u-s} \right. \\
 \left.
+\tau\frac{(X_u-X_s)^2}{(u-s)^2}+\theta\frac{X_u-X_s}{u-s} +
(1-\gamma)\frac{(X_u-X_s)(sX_u-uX_s)}{(u-s)^2} \right),
\end{multline}
see \cite[Theorem 2.2]{BMW1}.

Quadratic harnesses may have orthogonal martingale polynomials (see \cite{BMW1} for the assumptions and \cite{jamiolkowska} for some exceptions), some explicit
examples of which have been worked out in Section 4 of \cite{BMW1}, for some of them the corresponding quadratic
harnesses were constructed in a series of papers \cite{BMW2}, \cite{brycwesolo1}, \cite{brycwesolo2}. A recent
development in proving the existence of quadratic harnesses is \cite{brycwesolo4}, where the machinery of
Askey--Wilson polynomials have been used to construct the processes for a wide range of parameters
$\eta,\theta,\sigma,\tau$ and $\gamma$.

However, in some cases, the theory developed in \cite{brycwesolo4}, brings some unnecessary limitations for the values
of parameters assuring the existence of the given quadratic harness. One of them is the case of free quadratic harness
defined in section 4.1 of \cite{BMW1}. Free harnesses have parameter
\begin{equation}
\gamma=-\sigma\tau;
\end{equation}
their orthogonal martingale polynomials were identified in
\cite[Proposition 4.3]{BMW1}, for $\sigma,\tau\ge0$,
$\sigma\tau<1$ and $1+\alpha\beta>0$, where
\begin{equation}\label{alphabeta}
\alpha=\frac{\eta+\theta\sigma}{1-\sigma\tau},\ \beta=\frac{\eta\tau+\theta}{1-\sigma\tau}.
\end{equation}
On the other hand, Corollary 5.3 in \cite{brycwesolo4}, which discusses the range of parameters
that guarantee the existence of the free harness, requires additional assumption
\begin{equation}\label{unnecessary}
2+\eta\theta+2\sigma\tau\ge0
\end{equation}
to be compliant with the theory developed in \cite{brycwesolo4}; nevertheless, the univariate Askey-Wilson
distributions are still well defined when $2+\eta\theta+2\sigma\tau<0$.

The goal of this paper is to show that free quadratic harness exists without assumption \eqref{unnecessary} (this is stated in the main result of the paper -- Theorem \ref{main}). The technique we use is similar to the one used in previously mentioned work (\cite{BMW2}, \cite{brycwesolo1},
\cite{brycwesolo2}), although various 
details differ.  We rely on explicit  three step
recurrences for the orthogonal martingale polynomials and on explicit connection coefficients between related families
of orthogonal polynomials. 
We also use an operator representation to prove the quadratic harness property of the constructed process.
The paper is self-contained and does not use any results from \cite{brycwesolo4}.

Our main result is

\begin{theorem}\label{main}
For $\sigma,\tau\ge0$, $\sigma\tau<1$, $\gamma=-\sigma\tau$, and
$1+\alpha\beta>0$, there exists a 
Markov process $(X_t)_{t\in[0,\infty)}$ such that \eqref{covariances},
\eqref{linreg} 
and \eqref{condVar} hold. The process $(X_t)_{t\in[0,\infty)}$ is unique among the processes with infinitely-supported one-dimensional distributions that have moments of all orders and satisfy \eqref{covariances}, \eqref{linreg} and \eqref{condVar} with the same parameters $\eta$, $\theta$, $\sigma$, $\tau$ and $\gamma=-\sigma\tau$.
\end{theorem}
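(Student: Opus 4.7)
My strategy mirrors the constructive scheme of \cite{BMW2,brycwesolo1,brycwesolo2} and relies only on orthogonal polynomial tools together with an operator representation, so as to circumvent the Askey--Wilson machinery of \cite{brycwesolo4} with its spurious restriction \eqref{unnecessary}. The first step is to record, from \cite[Proposition 4.3]{BMW1}, the explicit three-term recurrence for the orthogonal martingale polynomials $\{p_n(x;t)\}_{n\ge 0}$ of the free quadratic harness. Under the hypotheses $\sigma,\tau\ge 0$, $\sigma\tau<1$ and $1+\alpha\beta>0$, I would check that the recurrence coefficients give positive squared norms and define a self-adjoint Jacobi operator of at most polynomial growth in $n$, so that Favard's theorem produces, for each $t\ge 0$, a probability measure $\mu_t$ of infinite support with moments of all orders. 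Reading off the first two recurrence coefficients gives $\wo[X_t]=0$ and $\wo[X_t^2]=t$, which is \eqref{covariances}.

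Next, I would build the two-dimensional distributions $\mu_{s,t}$, equivalently the transition kernels $P_{s,t}(x,dy)$, from a bilinear expansion in products $p_n(\cdot;s)\,p_n(\cdot;t)$ whose coefficients are extracted from the explicit connection coefficients between the two polynomial families. The Chapman--Kolmogorov identity is then an algebraic consequence of orthogonality. The main obstacle is to establish that $P_{s,t}$ is a positive kernel throughout the full parameter range $1+\alpha\beta>0$, since this is exactly where the Askey--Wilson weight argument of \cite{brycwesolo4} broke down. I expect to handle this either by factoring $P_{s,t}$ through a manifestly positive transfer operator extracted from the Cauchy transforms of $\mu_s$ and $\mu_t$, a natural object in the free-probability setting, or by analytic continuation from the subrange in which \eqref{unnecessary} already provides positivity.

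Once positivity is in hand, Kolmogorov's extension theorem produces a Markov process $(X_t)_{t\ge 0}$ with the prescribed marginals and transitions. To verify \eqref{linreg} and \eqref{condVar} I would adopt an operator representation: realize multiplication by $X_t$ and the one- and two-sided conditional expectations as matrices in the $p_n(\cdot;t)$ basis, and reduce the desired regression and quadratic variance identities to finitely many relations among the recurrence and connection coefficients, which can be verified by direct computation from the explicit formulas. Uniqueness then follows by induction on polynomial degree: the axioms \eqref{covariances}, \eqref{linreg} and \eqref{condVar}, combined with the infinite-support and finite-moment hypotheses, determine the orthogonal martingale polynomials of any candidate process and all mixed moments, and moment-determinacy of the marginals (which can be read off from the explicit $\mu_t$) then fixes all finite-dimensional distributions.
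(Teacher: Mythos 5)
Your overall architecture (Favard for the marginals, explicit transition kernels, operator representation for \eqref{linreg} and \eqref{condVar}, uniqueness via moments and determinacy) is the same as the paper's, but the one step that carries the real content of the theorem is left unresolved. The paper does not build $\mathbf{P}_{s,t}(x,\mathrm{d}y)$ from a bilinear expansion in $p_n(\cdot;s)p_n(\cdot;t)$; it defines it as the orthogonality measure of a second family of polynomials $Q_n(y;x,t,s)$ (Proposition \ref{QnFreeHarn}), obtained from the same quadratic-harness recursions with shifted initial data \eqref{iniqn}. With that device, positivity of the kernel reduces, via Favard, to nonnegativity of a single recurrence coefficient, the coefficient $(t-s)(1+\eta x+\sigma x^2)/(1+\sigma s)$ at $Q_0$ in \eqref{Q1}, and hence to the statement that the support of $\pi_s$ avoids the interval between the roots of $\sigma x^2+\eta x+1$ when $\eta^2>4\sigma$. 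Proving that support property is exactly the new technical work of the paper (Lemmas \ref{lemma31} and \ref{lemma44}): an explicit computation of the Cauchy--Stieltjes transform, location of the absolutely continuous part on $[a_-(t),a_+(t)]$ with $a_-(t)\ge b_+$, and a residue computation showing the atoms $c_\pm(t)$ never enter $(b_-,b_+)$. Your proposal contains no substitute for this; the two devices you sketch are not viable as stated. A ``manifestly positive transfer operator extracted from the Cauchy transforms'' is not exhibited and is not a routine object here (the conditional polynomials are related to $p_n(\cdot;t)$ by lower-triangular connection coefficients as in Proposition \ref{prop41}, not diagonally, and the measures have atoms, so no Poisson-kernel-type bilinear positivity argument is available). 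Analytic continuation from the subrange where \eqref{unnecessary} holds cannot work either: positivity of a family of measures is preserved under weak limits, so at best you reach the closure of the good parameter region, while the theorem is precisely about the open region $2+\eta\theta+2\sigma\tau<0$; positivity does not propagate by analytic continuation of densities or transforms, especially across a regime where the measure acquires or loses atoms.

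Two smaller points. First, reading off recurrence coefficients gives $\wo[X_t]=0$ and $\wo[X_t^2]=t$, but \eqref{covariances} also requires $\wo[X_sX_t]=\min(s,t)$, which needs the martingale polynomial property (as in Proposition \ref{prop51}); and before invoking any of this you must reduce to a favorable sign of $\alpha+\sigma\beta$ by time inversion and reflection, and dispose separately of the boundary cases $\tau=0$, $\theta=0$, $\sigma=\tau=0$, as the paper does. Second, for uniqueness, moment-determinacy of the one-dimensional marginals alone does not fix finite-dimensional distributions; you need determinacy of the joint laws, which the paper gets from compact support of the $\pi_t$ (bounded recurrence coefficients), so make that the argument rather than marginal determinacy.
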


The proof of this theorem is given in Section \ref{s:constr} after all auxiliary technical results are established.


It is worth mentioning that for some values of parameters, the univariate laws of free quadratic harnesses  are the first component of a two-state free convolution  semigroup  (for the free bi-Poisson process case see \cite{brycwesolo2}; for an extension to the case of $\sigma=0$ see \cite[Proposition 5]{AnshelevichMlotkowski2}).

\section{Orthogonal martingale polynomials and $q$-commutation equation}\label{s:orthopolyfreeharn}

This section presents a heuristic principle that can be used to find the recurrence for the polynomials orthogonal with respect to the conditional law $\mathcal{L}(X_t|X_s)$. (For the proof of Theorem \ref{main} such a derivation is not needed, as the actual recurrence used in the proof can be accepted as a guess.)

In \cite{BMW1} we defined the orthogonal martingale polynomials associated with the process $(X_t)_t$ as martingale polynomials (i.e. such that
\begin{equation}\label{op}
\wwo{p_n(X_t;t)}{\Fs}=p_n(X_s;s),
\end{equation}
holds, whenever $0\le s\le t$) that are orthogonal with respect to the one-dimensional distributions of the process. Theorem 2.3 from \cite{BMW1} states that their Jacobi matrix is linear in $t$,
\begin{equation*}
\mathbf{C}_t=
\begin{pmatrix}
\gamma_0 t+\delta_0 & \varepsilon_1 t+\phi_1 & 0  & 0 &  \ldots\\
\sigma\alpha_1t+\beta_1 & \gamma_1t+\delta_1 & \varepsilon_2t+\phi_2  & 0 & \ldots \\
0 & \sigma\alpha_2t+\beta_2 & \gamma_2t+\delta_2  & \varepsilon_3t+\phi_3 & \ldots \\
0 & 0 & \sigma\alpha_3t+\beta_3  & \gamma_3t+\delta_3  & \ldots \\
\vdots & \vdots & \vdots & \vdots  & \ddots
\end{pmatrix},
\end{equation*}
and for $n\ge1$ the coefficients $\alpha_n$, $\beta_n$, $\gamma_n$, $\delta_n$, $\phi_n$, $\varepsilon_n$ satisfy
\begin{equation}\label{eq1}
\sigma^2\tau\alpha_n\alpha_{n+1}
+\sigma\alpha_n\beta_{n+1}\gamma+\sigma\beta_n\beta_{n+1}=\sigma\alpha_{n+1}\beta_n,
\end{equation}
\begin{multline}\label{eq2}
\beta_{n+1}\gamma_{n+1}+\sigma\alpha_{n+1}\delta_n
=\sigma\alpha_{n+1}(\gamma_n+\gamma_{n+1})\tau+(\sigma\alpha_{n+1}\delta_{n+1}+\beta_{n+1}\gamma_n)\gamma+ \\
\beta_{n+1}(\delta_n+\delta_{n+1})\sigma +\sigma\alpha_{n+1}\theta+\beta_{n+1}\eta,
\end{multline}
\begin{multline}\label{eq3}
\beta_{n+1}\varepsilon_{n+1}+\gamma_n\delta_n+\sigma\alpha_{n}\varphi_n
=(\sigma\alpha_{n+1}\varepsilon_{n+1}+\gamma_n^2+\sigma\alpha_{n}\varepsilon_n)\tau
+\\(\sigma\alpha_{n+1}\varphi_{n+1}+\gamma_n\delta_n+\beta_{n}\varepsilon_n)\gamma
+(\beta_{n+1}\varphi_{n+1}+\delta_n^2+\beta_{n}\varphi_n)\sigma+\gamma_n\theta+\delta_n\eta+1,
\end{multline}
\begin{equation}\label{eq4}
\gamma_{n-1}\varphi_n+\delta_n\varepsilon_n
=(\gamma_{n-1}+\gamma_n)\varepsilon_n\tau+(\gamma_n\varphi_n+\delta_{n-1}\varepsilon_n)\gamma
+(\delta_{n-1}+\delta_n)\varphi_n\sigma+\varepsilon_n\theta+\varphi_n\eta,
\end{equation}
\begin{equation}\label{eq5}
\varepsilon_{n}\varphi_{n+1}=\varepsilon_{n}\varepsilon_{n+1}\tau+\varepsilon_{n+1}\varphi_{n}\gamma+\varphi_{n}\varphi_{n+1}\sigma,
\end{equation}
with the initial values given by
\begin{equation}\label{inipn}
\alpha_1=0,\ \beta_1=1,\ \gamma_0=\delta_0=0,\ \varepsilon_1=1,\ \phi_1=0.
\end{equation}

Now, for a fixed $r>0$, conditionally on $X_r$, the process 
$(Y_t^{(r)})_{t>0}$
\[
Y_t^{(r)}=\sqrt{\frac{1+\sigma r}{1+\eta X_r+\sigma X_r^2}}\left(X_{r+t}-X_r\right)
\]
still is a quadratic harness. Therefore when one considers the conditional distribution $\mathcal{L}(X_t|X_r)$ and the corresponding orthogonal polynomials $Q_n(y;x,t,r)$, \cite[Theorem 2.3]{BMW1} implies that their Jacobi matrix is again linear in $t$ and  that its entries again satisfy relations \eqref{eq1}-\eqref{eq5}. The only difference is that the initial values for the sequences should be modified as follows:
\begin{equation}\label{iniqn}
\alpha_1=0,\ \beta_1=1,\ \gamma_0=0,\ \delta_0=x,\ \varepsilon_1=\frac{1+\eta x+\sigma x^2}{1+\sigma r},\ \phi_1=\frac{-r\left(1+\eta x+\sigma x^2\right)}{1+\sigma r},
\end{equation}
as we choose the first polynomials $Q_n$ as
\begin{equation*}
Q_{-1}(y;x,t,s)\equiv0,\ Q_0(y;x,t,s)\equiv1,\ Q_1(y;x,t,s)=y-x,
\end{equation*}
and
\begin{multline*}
Q_2(y;x,t,s)=y^2\frac{1+\sigma s}{1+\sigma t}-\\
y\Biggl\{x\frac{(1+\sigma s)(1+\gamma)}{1+\sigma s+\sigma(\gamma s-\tau)} + \frac{(1+\sigma
s)\left[\theta+\eta\tau+t(\eta+\sigma\theta)-(1+\gamma)s\eta\right]} {(1+\sigma t)\left[1+\sigma
s+\sigma(\gamma s-\tau)\right]}\Biggr\} + \\
x^2\frac{\gamma+\sigma\tau}{1+\sigma s+\sigma(\gamma s-\tau)} + x
\frac{\theta+\eta\tau+s(\eta+\sigma\theta)-(1+\gamma)s\eta}{1+\sigma s+\sigma(\gamma s-\tau)}-\frac{t-s}{1+\sigma t}.
\end{multline*}

One can use equations \eqref{eq1}-\eqref{eq5} with initial values \eqref{iniqn} to derive the recurrences for the polynomials $Q_n$ from several previously studied cases from \cite{BMW1}, \cite{BMW2} and \cite{brycwesolo2}.

Here we are interested in the free harness case $\gamma+\sigma\tau=0$. After a calculation we get

 \begin{proposition}\label{QnFreeHarn}
 Suppose $\sigma,\tau\ge0$, $\sigma\tau<1$, $1+\alpha\beta> 0$,
 (see \eqref{alphabeta}) and
$\gamma=-\sigma\tau$.
Then recurrences \eqref{eq1}-\eqref{eq5} with initial condition \eqref{iniqn} have a  solution which defines the following three-step recurrence for polynomials $(Q_n)$ in variable $y$; here $s>0$ and $x\in\mathbb{R}$ are parameters.
\begin{multline}
\label{Q1} y Q_1(\yxts)=(1+\sigma t) Q_2(\yxts)+\left(\frac{\alpha+\sigma x}{1+\sigma
s}t+\frac{\beta-s(\eta+\sigma x)}{1+\sigma s}\right)Q_1(\yxts) \\
+\frac{(t-s)(1+\eta x+\sigma x^2)}{1+\sigma s}Q_0(\yxts)
\end{multline}
\begin{multline} \label{Q2} y Q_2(\yxts)=(1+\sigma t)
Q_3(\yxts)+ \frac{(\alpha+\sigma \beta)t+\beta+\alpha\tau}{1-\sigma
\tau}Q_2(\yxts)\\
+\frac{(t+\tau)(1+\alpha\beta)}{(1+\sigma s)(1-\sigma \tau)}Q_1(\yxts)
\end{multline}
\begin{multline} \label{Qn} y Q_n(\yxts)=(1+\sigma t)
Q_{n+1}(\yxts)+ \frac{(\alpha+\sigma \beta)t+\beta+\alpha\tau}{1-\sigma
\tau}Q_n(\yxts)\\
+\frac{(t+\tau)(1+\alpha\beta)}{(1-\sigma \tau)^2}Q_{n-1}(\yxts),\quad n\geq 3,
\end{multline}
with $Q_0\equiv1$ and $Q_1(y;x,t,s)=y-x$.
\end{proposition}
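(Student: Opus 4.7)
The approach is to produce explicit sequences $(\alpha_n),(\beta_n),(\gamma_n),(\delta_n),(\varepsilon_n),(\varphi_n)$ satisfying the algebraic system \eqref{eq1}--\eqref{eq5} with initial values \eqref{iniqn}, and then to read off the three-term recurrence
\begin{equation*}
yQ_n=(\sigma\alpha_{n+1}t+\beta_{n+1})Q_{n+1}+(\gamma_n t+\delta_n)Q_n+(\varepsilon_n t+\varphi_n)Q_{n-1},
\end{equation*}
which should coincide with \eqref{Q1}--\eqref{Qn}. Since \eqref{Qn} has coefficients independent of $n$, I anticipate that the solution stabilizes for $n$ large, with only a short transient for $n=1,2$ forced by \eqref{iniqn}.

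The key observation is that at $\gamma=-\sigma\tau$ equations \eqref{eq1} and \eqref{eq5} both factor: a direct manipulation gives
\begin{equation*}
\sigma(\alpha_{n+1}-\beta_{n+1})(\sigma\tau\alpha_n-\beta_n)=0\qquad\text{and}\qquad(\varphi_{n+1}-\tau\varepsilon_{n+1})(\varepsilon_n-\sigma\varphi_n)=0.
\end{equation*}
Starting from $\alpha_1=0$, $\beta_1=1$, the first identity forces $\alpha_n=\beta_n$ for all $n\ge 2$, while the second forces $\varphi_n=\tau\varepsilon_n$ in the stabilized regime. Substituting these identities into \eqref{eq2}--\eqref{eq4} linearizes them in the remaining unknowns, and elimination using \eqref{alphabeta} yields the constants $\gamma_n=\frac{\alpha+\sigma\beta}{1-\sigma\tau}$, $\delta_n=\frac{\beta+\alpha\tau}{1-\sigma\tau}$, $\varepsilon_n=\frac{1+\alpha\beta}{(1-\sigma\tau)^2}$ for $n\ge 3$. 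The transient values for $n=1,2$ are then read off from \eqref{iniqn} and from the recurrences that straddle the boundary; in particular $\varepsilon_1,\varphi_1$ pick up the scalar factor $(1+\eta x+\sigma x^2)/(1+\sigma s)$, and $\varepsilon_2,\varphi_2$ interpolate by a single factor $1/(1+\sigma s)$ between the transient and stabilized regimes.

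The remainder of the proof is verification of \eqref{eq1}--\eqref{eq5} for the chosen sequences. For $n\ge 3$ every coefficient is constant and each identity reduces to an algebraic relation in $\alpha,\beta,\sigma,\tau$ that is immediate. The main obstacle is the bookkeeping at the transition indices $n=1,2,3$, where stabilized and transient values coexist in a single equation. Equation \eqref{eq4} at $n=2$ is typical: it couples the $x$-dependent $\gamma_1,\delta_1,\varepsilon_1,\varphi_1$ with the constants $\gamma_2,\delta_2$, and only closes after repeated use of $\gamma+\sigma\tau=0$ together with the defining relation \eqref{alphabeta} between $(\alpha,\beta)$ and $(\eta,\theta)$. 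No conceptual novelty appears to be required beyond careful symbolic computation; once all the boundary identities check out, rearranging the result into the form $yQ_n=\ldots$ produces \eqref{Q1}--\eqref{Qn} directly.
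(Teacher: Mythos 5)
Your proposal is correct and follows essentially the same route as the paper, which establishes Proposition \ref{QnFreeHarn} by exactly this kind of direct calculation (stated only as ``after a calculation we get''): your factorizations of \eqref{eq1} and \eqref{eq5} at $\gamma=-\sigma\tau$, namely $\sigma(\alpha_{n+1}-\beta_{n+1})(\sigma\tau\alpha_n-\beta_n)=0$ and $(\varphi_{n+1}-\tau\varepsilon_{n+1})(\varepsilon_n-\sigma\varphi_n)=0$, are right, and the stabilized values you extract via \eqref{alphabeta} do satisfy \eqref{eq2}--\eqref{eq4} and reproduce \eqref{Qn}, while the transient coefficients at $n=1,2$ match \eqref{iniqn} and \eqref{Q1}--\eqref{Q2}. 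The only remaining work is the boundary-index verification you already identify as routine bookkeeping (note the $n=2$ coefficient is $(t+\tau)(1+\alpha\beta)/\bigl((1+\sigma s)(1-\sigma\tau)\bigr)$, i.e.\ it differs from the stabilized one by the factor $(1-\sigma\tau)/(1+\sigma s)$ rather than $1/(1+\sigma s)$), which is the same computation the paper leaves implicit.
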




Since $p_n(y;t)=Q_n(y;0,t,0)$, it is not surprising that the above recurrence coincides in this case with \cite[Proposition 4.3]{BMW1} which we cite here for ease of reference in the proofs below.
\begin{proposition}[{\cite[Proposition 4.3]{BMW1}}]\label{Prop free biM}
Suppose $(X_t)_t$ is a quadratic harness with parameters such that
$\sigma,\tau\ge0$, $\sigma\tau<1$, $1+\alpha\beta> 0$, and
$\gamma=-\sigma\tau$. If for $t>0$ the random variable $X_t$ has all
moments and infinite support,
then
it has orthogonal martingale polynomials $(p_n)_n$ given by the three step recurrences 
\begin{align*}
yp_1(y;t)&= (1+\sigma t)p_2(y;t)+(\alpha t+\beta) p_1(y;t)+tp_0(y;t),\\
y p_2(y;t)&=(1+\sigma t)p_3(y;t)+\frac{(\alpha+\sigma\beta)t+\beta+\alpha\tau}{1-\sigma\tau} p_2(y;t)
+\frac{(t+\tau)(1+\alpha\beta)}{1-\sigma\tau}p_1(y;t),\\
y p_n(y;t)&=(1+\sigma t)p_{n+1}(y;t)+ \frac{(\alpha+\sigma\beta)t+\beta+\alpha\tau}
  {1 - \sigma \tau  }p_n(y;t)
  \\& \hspace{4cm}+
 \frac{(t+\tau)(1+\alpha\beta)}{(1-\sigma\tau)^2}p_{n-1}(y;t), \quad n\geq 3, \label{Free n}
\end{align*}
with $p_0\equiv1$ and $p_1(y;t)=y$.
\end{proposition}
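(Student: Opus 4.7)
My plan is to deduce Proposition \ref{Prop free biM} as the specialization $x=0,\ s=0$ of the already-established Proposition \ref{QnFreeHarn}. The key observation is that the defining system \eqref{eq1}--\eqref{eq5} for the Jacobi-matrix entries does not involve $s$ or $x$; these parameters enter only through the initial conditions \eqref{inipn} (for the martingale polynomials $(p_n)$) and \eqref{iniqn} (for the conditional polynomials $(Q_n(\cdot;x,t,s))$). Substituting $x=0$ and $s=0$ in \eqref{iniqn} reproduces exactly \eqref{inipn}, so the two sequences of Jacobi-matrix entries satisfy the same recurrences with the same seed. Since in addition $p_0=Q_0\equiv 1$ and $p_1(y;t)=y=Q_1(y;0,t,0)$, one concludes $p_n(y;t)=Q_n(y;0,t,0)$ for all $n\geq 0$.

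With this identification in hand, the conclusion follows by plugging $x=0$ and $s=0$ into \eqref{Q1}, \eqref{Q2} and \eqref{Qn}. In \eqref{Q1}, the coefficient of $Q_1$ collapses to $\alpha t+\beta$ and the coefficient of $Q_0$ to $t$, producing the first recurrence of Proposition \ref{Prop free biM}. In \eqref{Q2}, the prefactor $1/(1+\sigma s)$ becomes $1$, giving the second recurrence. Equation \eqref{Qn} already does not depend on $s$ or $x$, so it yields the $n\geq 3$ recurrence verbatim.

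The only subtle point is the identification $p_n(\cdot;t)=Q_n(\cdot;0,t,0)$, because Proposition \ref{QnFreeHarn} is formally stated for $s>0$. I would justify it in one of two ways. Probabilistically, \eqref{covariances} forces $X_0=0$ almost surely, so $\mathcal{L}(X_t\mid X_0)=\mathcal{L}(X_t)$ and the two orthogonal polynomial families coincide term by term. Algebraically, the derivation of Proposition \ref{QnFreeHarn} is purely recursive and goes through unchanged at $s=x=0$; the hypotheses $\sigma\tau<1$ and $1+\alpha\beta>0$ keep the off-diagonal products in $\mathbf{C}_t$ strictly positive for $t>0$, so the three-step relations define a bona fide orthogonal polynomial system. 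Either route closes the argument, and the remaining verification is a direct algebraic substitution.
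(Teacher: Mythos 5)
First, a point of reference: the paper does not prove this statement at all --- Proposition \ref{Prop free biM} is quoted verbatim from \cite[Proposition 4.3]{BMW1} ``for ease of reference in the proofs below,'' so there is no internal proof to compare against. Your algebra is correct as far as it goes: at $x=0$, $s=0$ the seed \eqref{iniqn} does reduce to \eqref{inipn}, and \eqref{Q1}--\eqref{Qn} do collapse to the displayed recurrences (with $Q_1(y;0,t,0)=y=p_1(y;t)$). But as a \emph{proof} of the proposition the argument has a genuine gap. Proposition \ref{QnFreeHarn} is a purely algebraic assertion: the system \eqref{eq1}--\eqref{eq5} with a given seed \emph{has a solution} which defines a certain three-step recurrence. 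It says nothing about the process $(X_t)_t$. The content of Proposition \ref{Prop free biM} is that the quadratic harness actually \emph{possesses} orthogonal martingale polynomials and that these are given by the displayed recurrence. Bridging that gap requires (i) the structural result \cite[Theorem 2.3]{BMW1}, which under the moment and infinite-support hypotheses guarantees the existence of orthogonal martingale polynomials whose Jacobi matrix is linear in $t$ with entries satisfying \eqref{eq1}--\eqref{eq5} and \eqref{inipn}, and (ii) an argument that the particular solution exhibited in Proposition \ref{QnFreeHarn} is the one realized by the process: the system supplies only five relations for the six new entries at each level, so the solution is determined only up to the normalization fixed by the leading-coefficient convention, and this has to be addressed before you can conclude that the process's polynomials satisfy \emph{these} recurrences rather than some other solution of the same system. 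You invoke neither point.

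There is also a structural mismatch with the paper's logic that you should be aware of. The authors explicitly present Section \ref{s:orthopolyfreeharn} as a ``heuristic principle'' and state that the recurrences for $(Q_n)_n$ ``can be accepted as a guess''; the reason is that deriving \eqref{iniqn} requires applying \cite[Theorem 2.3]{BMW1} to the conditioned process $(Y_t^{(r)})_t$ without verifying that it meets that theorem's hypotheses. Your ``probabilistic'' justification of the $s=x=0$ specialization (that $X_0=0$ a.s.\ forces $\mathcal{L}(X_t\mid X_0)=\mathcal{L}(X_t)$) presupposes that the $Q_n$ are in fact the orthogonal polynomials of the conditional law, which is precisely the part the authors decline to prove. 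In effect you are running the paper's logic backwards: in \cite{BMW1}, Proposition \ref{Prop free biM} is proved directly from \cite[Theorem 2.3]{BMW1} by solving the system with seed \eqref{inipn}, and the $(Q_n)_n$ recurrence of Proposition \ref{QnFreeHarn} is the (heuristic) extension of that computation to conditional laws --- not the other way around. A correct self-contained proof would follow the \cite{BMW1} route: cite Theorem 2.3 there, solve \eqref{eq1}--\eqref{eq5} with \eqref{inipn} under $\gamma=-\sigma\tau$ directly, and note the normalization convention.
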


Thus the recurrences for the polynomials $(Q_n)_n$ and $(p_n)_n$ turn out to be some finite perturbations of the constant coefficient recurrence. Therefore $(Q_n)_n$ and $(p_n)_n$ turn out to be Bernstein--Sz\"{e}go polynomials (see \cite[\S 2.6]{szego}), which are orthogonal with respect to the probability measure with the absolutely continuous part of the form $\sqrt{ax^2+bx+c}/\rho(x)$, where $\rho$ is a polynomial.

\section{One dimensional distributions}\label{s:onedimdistr}

Let $\pi_t$ denote the 
 orthogonality measure 
 of the polynomials $(p_n(y;t))_n$ with $t>0$, given by the three step recurrences in Proposition \ref{Prop free biM}. The existence of $\pi_t$ is assured by Favard's Theorem. In order to examine $\pi_t$ we will compute its Cauchy--Stieltjes transform $G_t$. Recall that the Cauchy--Stieltjes transform of a probability measure $\mu$ is an analytic mapping of the upper complex half-plane $\zesp_+$ into the lower half-plane $\zesp_-$ defined as
\[
G(z)=\int_\rzecz\frac{1}{z-x}\ \mu(\textrm{d}x).
\]
One of the properties of the Cauchy--Stieltjes transform that we shall use here is
\begin{equation}\label{cslimit}
\lim_{y\to\infty}iyG(iy)=1
\end{equation}
(see e.g. \cite{geronimohill}).
\begin{lemma}\label{lemma31}
If $\eta^2>4\sigma>0$, $\theta^2>4\tau>0$ and $\alpha+\sigma\beta>0$ then
\[
\pi_t\Biggl(\left(\frac{-\eta-\sqrt{\eta^2-4\sigma}}{2\sigma},\frac{-\eta+\sqrt{\eta^2-4\sigma}}{2\sigma}\right)\Biggr)=0.
\]
\end{lemma}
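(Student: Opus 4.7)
The plan is to compute the Cauchy--Stieltjes transform $G_t(z)$ of $\pi_t$ in closed form from the three-term recurrence of Proposition~\ref{Prop free biM}, and then read off that no mass lies on the open interval $(r_1,r_2)$ bounded by the two real roots of $\sigma z^2+\eta z+1=0$. Since the recurrence has constant coefficients for $n\ge 3$, passing to the monic polynomials $\tilde p_n:=(1+\sigma t)^{n-1}p_n$ yields a Jacobi recurrence that is constant from $n=3$ onward, with
\[
a_n=A:=\frac{(\alpha+\sigma\beta)t+\beta+\alpha\tau}{1-\sigma\tau},\quad b_n=C(1+\sigma t),\quad C:=\frac{(t+\tau)(1+\alpha\beta)}{(1-\sigma\tau)^2}.
\]
The tail of the associated J-fraction is self-similar and equals $\phi(z)=\tfrac12\bigl((z-A)-\Delta(z)\bigr)$, where $\Delta(z):=\sqrt{(z-A)^2-4C(1+\sigma t)}$ with the branch pinned by $\Delta(z)\sim z-A$ at infinity. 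Substituting $\phi$ back through the two unmatched layers at $n=1,2$, and using the algebraic identities $\alpha-\sigma\beta=\eta$ and $\beta-\alpha\tau=\theta$ (both immediate from \eqref{alphabeta}) to collect terms, one arrives at
\[
G_t(z)=\frac{(1+\sigma\tau)z-(\eta t+\theta)+(1-\sigma\tau)\Delta(z)}{(1+\sigma\tau)z^2-(\eta t+\theta)z-2t+(1-\sigma\tau)z\Delta(z)}.
\]

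By Stieltjes inversion, $\pi_t$ is concentrated on the union of the branch cut $I_t:=[A-2\sqrt{C(1+\sigma t)},A+2\sqrt{C(1+\sigma t)}]$ of $\Delta$ (carrying the absolutely continuous part, with density proportional to $\sqrt{4C(1+\sigma t)-(x-A)^2}/|\textrm{den}|^2$) and any real poles of $G_t$ outside $I_t$ (the atoms). To show $\pi_t((r_1,r_2))=0$ it therefore suffices to verify: (i)~$I_t\cap(r_1,r_2)=\emptyset$, and (ii)~the rationalised denominator of $G_t$ has no zero inside $(r_1,r_2)$. For (i), clearing the positive factor $(1-\sigma\tau)^2$ turns the inequality $(z-A)^2>4C(1+\sigma t)$ on $[r_1,r_2]$ into a polynomial inequality of degree two in $t$ whose coefficients collapse via $\alpha-\sigma\beta=\eta$, $\beta-\alpha\tau=\theta$, and the pivotal cancellation
\[
(\alpha+\sigma\beta)^2-4\sigma(1+\alpha\beta)=(\alpha-\sigma\beta)^2-4\sigma=\eta^2-4\sigma,
\]
together with its symmetric twin $(\beta+\alpha\tau)^2-4\tau(1+\alpha\beta)=\theta^2-4\tau$. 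The hypotheses $\eta^2>4\sigma$ and $\theta^2>4\tau$ then make the leading and trailing coefficients of this quadratic strictly positive, while $\alpha+\sigma\beta>0$ combined with $1+\alpha\beta>0$ forces $A>r_2$, so that $r_2$ is the binding endpoint of $[r_1,r_2]$. For (ii), rationalising the denominator of $G_t$ produces a polynomial in $z$ whose sign on $(r_1,r_2)$ is controlled by $\sigma z^2+\eta z+1<0$ and by the information from step (i); an elementary case analysis then rules out real zeros inside $(r_1,r_2)$.

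The principal obstacle is the arithmetic in step (i). Numerical experiments at symmetric parameter values suggest the inequality is essentially \emph{tight}---the gap between $I_t$ and $r_2$ can be as small as the parameter-dependent constants allow---so the cancellations above must be exploited exactly and not through crude bounds. Discovering the right groupings, so that the polynomial from step (i) splits manifestly into a sum of pieces controlled by $\eta^2-4\sigma$, $\theta^2-4\tau$, and $1+\alpha\beta$, is the genuine content of the lemma; the rest is bookkeeping.
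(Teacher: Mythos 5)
Your overall route is the same as the paper's: compute $G_t$ from the eventually constant recurrence coefficients via the self-similar tail of the continued fraction, put the absolutely continuous part on the branch-cut interval, and place possible atoms at real poles off the cut (your closed form is consistent with \eqref{duze-G} after rationalization). However, the two steps you leave as ``bookkeeping'' are exactly where the content lies, and one of them is misdescribed. In step (i), positivity of the leading and trailing coefficients of the quadratic in $t$, namely $(1-\sigma\tau)^2\bigl[(z-A(t))^2-4C(t)(1+\sigma t)\bigr]$, does not imply nonnegativity for all $t>0$: a quadratic $at^2+bt+c$ with $a,c>0$ can be negative on an interval of positive $t$. Since, as you yourself note, the inequality is tight (at $z=b_+$ the quadratic has a double root at some $t_\ast>0$), the middle coefficient/discriminant must be controlled exactly; the paper does this in Claim \ref{claimcont} by showing that $a_-$ is convex, locating its critical point $t_\ast$ via $\alpha-\sigma\beta=\eta$ and $(\alpha+\sigma\beta)^2-4\sigma(1+\alpha\beta)=\eta^2-4\sigma$, and checking $a_-(t_\ast)=b_+$. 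Your proposal explicitly defers this computation (``discovering the right groupings \dots is the genuine content''), so step (i) is not actually proved.

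Step (ii) would fail as planned. The rationalized denominator of $G_t$ is a constant multiple of $(\sigma z^2+\eta z+1)(\tau z^2+\theta t z+t^2)$, and in the relevant case $\eta>0$, $\theta>0$ its root $c_-(t)=-t\bigl(\theta-\sqrt{\theta^2-4\tau}\bigr)/(2\tau)$ genuinely enters the interval $(b_-,b_+)$ as soon as $t>\bigl(\theta+\sqrt{\theta^2-4\tau}\bigr)/\bigl(\eta+\sqrt{\eta^2-4\sigma}\bigr)$. So one cannot ``rule out real zeros of the rationalised denominator inside $(r_1,r_2)$''; what must be shown instead is that at those zeros $G_t$ has no pole, i.e. the residue (the mass of the would-be atom) vanishes there. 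That is precisely the residue computation in Claim \ref{claimdisc}: $p_+(t)\equiv 0$, and $p_-(t)>0$ only on a finite $t$-interval on which $c_-(t)>b_+$; the easy case $\eta\theta<0$ is handled separately. Without this residue analysis (and a justification, as in the paper via Stieltjes--Perron and the standard boundary-value criterion, that there is no singular continuous part and atoms occur only at poles), your plan cannot exclude atoms in the gap, so the proposal has genuine gaps at both decisive steps.
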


\begin{proof} It is known (see \cite{askeyismailmemoirs}, (2.10) and Theorem 2.4) that with orthogonal polynomials
one can associate a continued fraction (built from the
coefficients of the three term recurrence), which, if it converges,
is equal to the Cauchy--Stieltjes transform of the orthogonality
measure of the polynomials. In the case of the recurrence from
Proposition \ref{Prop free biM}, one gets
\begin{equation}\label{gduze}
G_t(z)=\cfrac{1}{z-\cfrac{t}{z-(\alpha t+\beta)-\frac{(1+\alpha\beta)(t+\tau)(1+\sigma t)}{1-\sigma\tau}g_t(z)}},
\end{equation}
where $g_t$ has the continued fraction expansion
\[
g_t(z)=\cfrac{\frac{1}{1+\sigma t}}{\frac{1}{1+\sigma t}z-\frac{(\alpha+\sigma\beta)t+\beta+\alpha\tau}{(1+\sigma t)(1-\sigma\tau)}-
\cfrac{\frac{(1+\alpha\beta)(t+\tau)}{(1+\sigma t)(1-\sigma\tau)^2}}{\frac{1}{1+\sigma t}z-\frac{(\alpha+\sigma\beta)t+\beta+\alpha\tau}{(1+\sigma t)(1-\sigma\tau)}-\cfrac{\frac{(1+\alpha\beta)(t+\tau)}{(1+\sigma t)(1-\sigma\tau)^2}}{\frac{1}{1+\sigma t}z-\ldots}}},
\]
so $g_t$ itself is the Cauchy--Stieltjes transform of a measure and it satisfies the quadratic equation
\[
g_t(z)=\cfrac{1}{z-\cfrac{(\alpha+\sigma\beta)t+\beta+\alpha\tau}{1-\sigma\tau}-
\cfrac{(1+\alpha\beta)(t+\tau)(1+\sigma t)}{(1-\sigma\tau)^2}\cdot g_t(z)}.
\]
(To justify convergence of the continued fraction expansion of $g_t$, one can use e.g. Theorem 2.1 \cite{askeyismailmemoirs}.) Hence
\begin{multline}\label{gmale}
g_t(z)=\frac{1-\sigma\tau}{2(1+\alpha\beta)(t+\tau)(1+\sigma t)}
\Biggl((1-\sigma\tau)z-(\alpha+\sigma\beta)t-\beta-\alpha\tau\\
\pm\sqrt{\Bigl((1-\sigma\tau)z-(\alpha+\sigma\beta)t-\beta-\alpha\tau\Bigr)^2-4(1+\alpha\beta)(t+\tau)(1+\sigma t)}\Biggr).
\end{multline}
By the square root in \eqref{gmale} we understand
\[
\sqrt{az-b-2\sqrt{c}}\cdot
\sqrt{az-b+2\sqrt{c}}=a\sqrt{z-\frac{b+2\sqrt{c}}{a}}\cdot\sqrt{z-\frac{b-2\sqrt{c}}{a}}
\]
where $a=1-\sigma\tau>0$, $b=(\alpha+\sigma\beta)t+\beta+\alpha\tau\in\rzecz$, $c=(1+\alpha\beta)(t+\tau)(1+\sigma t)>0$, the mappings
$z\mapsto\sqrt{z-(b\pm2\sqrt{c})/a}$ are analytic on $\zesp\setminus\left\{(b\pm2\sqrt{c})/a-t\ :\ t>0\right\}$, and take positive values for $\rzecz\ni z>(b+2\sqrt{c})/a$ (so they are the principal   branches of the square root, composed with linear transformations $z\mapsto z-(b\pm2\sqrt{c})/a$). Hence, by \eqref{cslimit}, one has to choose the "$-$" sign in the "$\pm$" in \eqref{gmale} .

Now, after inserting \eqref{gmale} into \eqref{gduze}, a calculation that uses \eqref{alphabeta} gives
\begin{multline}\label{duze-G} 
G_t(z)=\frac{\tau z+\theta t}{\tau z^2+\theta t z+t^2}+\frac{t\left[(1+\sigma\tau+2\sigma t)z+t\eta-\theta\right]}{2(\sigma z^2+\eta z+1)(\tau z^2+\theta t z+t^2)}\\
-\frac{t\sqrt{\left[(1-\sigma\tau)z-(\alpha+\sigma\beta)t-\beta-\alpha\tau\right]^2-4(1+\sigma t)(t+\tau)(1+\alpha\beta)}}{2(\sigma z^2+\eta z+1)(\tau z^2+\theta t z+t^2)}.
\end{multline}
Stieltjes--Perron inversion formula (see e.g. \cite[Theorem 2.5 and Section 2.3]{askeyismailmemoirs}) states that a finite Borel measure $\nu$ with the Cauchy--Stieltjes transform $G$ is absolutely continuous with
respect to Lebesgue measure on the set
\[
A=\{x:\lim_{\varepsilon\downarrow0} G(x+i\varepsilon)=\Phi(x),\textrm{ a finite number with }\MojeIm\Phi(x)\ne0\}.
\]
The atoms can only be located at simple poles of $G$ (see \cite{askeyismailmemoirs}). A very useful result (see \cite{reedsimon4}, Chapter XIII.6) states that if
\[
B=\{x:\lim_{\varepsilon\downarrow0} G(x+i\varepsilon)=\infty\},
\]
then $\nu(\rzecz\setminus(A\cup B))=0$ and $\nu$ restricted to $B$ is singular relative to Lebesgue measure. Therefore we see that the absolutely continuous part of $\pi_t$ is concentrated on the interval $[a_{-}(t),a_{+}(t)]$ with
\begin{equation*}
a_{\pm}(t)=\frac{(\alpha+\sigma\beta)t+\beta+\tau\alpha\pm2\sqrt{(1+\sigma t)(t+\tau)(1+\alpha\beta)}}{1-\sigma\tau},
\end{equation*}
the atoms can be located at (at most) four points being
zeros
of the polynomial $(\sigma z^2+\eta z+1)(\tau z^2+\theta t z+t^2)$:
\[
b_{\pm}=\frac{-\eta\pm\sqrt{\eta^2-4\sigma}}{2\sigma},\quad c_{\pm}(t)=-t\frac{\theta\pm\sqrt{\theta^2-4\tau}}{2\tau},
\]
and $\pi_t$ does not have continuous singular part.

Hence we will have established the lemma if we prove the following claims.
\begin{claim}\label{claimcont}
The continuous part of $\pi_t$ does not assign any probability to the interval $[b_{-},b_{+}]$.
\end{claim}
\begin{proof}
It suffices to check that $a_{-}(t)\ge b_{+}$ for $t>0$. It is easy to see that
\[
a_{-}^{\prime\prime}(t)=\frac{(1-\sigma\tau)\sqrt{1+\alpha\beta}}{2(1+\sigma t)^{3/2}(t+\tau)^{3/2}}>0.
\]
Note that $\alpha-\sigma\beta=\eta$ so
\[
t_\ast=\frac{(\alpha+\sigma\beta)(1-\sigma\tau)}{2\sigma\sqrt{(\alpha-\sigma\beta)^2-4\sigma}}-\frac{1+\sigma\tau}{2\sigma}
\]
is well defined.
The following bound shows that $t_\ast$ is in the domain of $a_-(t)$:
\[
(1+\sigma t_\ast)(t_\ast+\tau)=\frac{(1-\sigma\tau)^2(1+\alpha\beta)}{\eta^2-4\sigma}>0.
\]
Since $a'_-(t_\ast)=0$, and $a_-$ is convex, this is a global minimum of $a_-$.  It follows   that
 $a_{-}(t)\ge a_{-}(t_\ast)$ for $t>0$. Since $a_{-}(t_\ast)=b_{+}$, the proof is complete.
\end{proof}

\begin{claim}\label{claimdisc}
The discrete part of $\pi_t$ does not assign any probability to the interval $(b_{-},b_{+})$.
\end{claim}
\begin{proof}
If $\eta\theta<0$ then the points $b_\pm$ and $c_\pm(t)$ are separated by the interval $(a_{-}(t),a_{+}(t))$, so $c_\pm(t)\notin(b_{-},b_{+})$ for all $t>0$.

Suppose then that $\eta$ and $\theta$ are of the same sign. Since by \eqref{alphabeta}
\[
0<\alpha+\sigma\beta=\frac{\eta(1+\sigma\tau)+2\sigma\theta}{1-\sigma\tau},
\]
it follows that $\eta>0$ and $\theta>0$. The weights $p_\pm(t)$ of the points $c_\pm(t)$ are given by the residues of the Cauchy-Stieltjes transform $G_t$ at the points $c_\pm(t)$ (see \cite{askeyismailmemoirs}). A lengthy calculation reveals that
\begin{eqnarray*}
p_{-}(t)&=&\frac{2\tau\biggl(-t\left[2\eta\tau+(1+\sigma\tau)(\theta-\sqrt{\theta^2-4\tau})\right]+2\tau\sqrt{\theta^2-4\tau}\biggr)_{+}}
{\sigma\sqrt{\theta^2-4\tau}\left(\theta-\sqrt{\theta^2-4\tau}\right)^2
\left(t-\frac{\theta+\sqrt{\theta^2-4\tau}}{\eta-\sqrt{\eta^2-4\sigma}}\right)
\left(t-\frac{\theta+\sqrt{\theta^2-4\tau}}{\eta+\sqrt{\eta^2-4\sigma}}\right)},\\
p_{+}(t)&=&\frac{-2\tau\biggl(-t\left[2\eta\tau+(1+\sigma\tau)(\theta+\sqrt{\theta^2-4\tau})\right]-2\tau\sqrt{\theta^2-4\tau}\biggr)_{+}}{\sigma\sqrt{\theta^2-4\tau}\left(\theta+\sqrt{\theta^2-4\tau}\right)^2
\left(t-\frac{\theta-\sqrt{\theta^2-4\tau}}{\eta+\sqrt{\eta^2-4\sigma}}\right)
\left(t-\frac{\theta-\sqrt{\theta^2-4\tau}}{\eta-\sqrt{\eta^2-4\sigma}}\right)},
\end{eqnarray*}
where $(a)_{+}=(a+|a|)/2$. Clearly $p_{+}(t)=0$, and $p_{-}(t)>0$ only on the finite interval
\[
0\le t<\frac{2\tau\sqrt{\theta^2-4\tau}}{(\theta-\sqrt{\theta^2-4\tau})(1+\sigma\tau)+2\eta\tau};
\]
in particular,
\[
\frac{2\tau\sqrt{\theta^2-4\tau}}{(\theta-\sqrt{\theta^2-4\tau})(1+\sigma\tau)+2\eta\tau}<
\frac{\theta+\sqrt{\theta^2-4\tau}}{\eta+\sqrt{\eta^2-4\sigma}}.
\]
Since $c_{-}$ evaluated at the right hand side of the above inequality is equal to $b_{+}$, we get that the support of the discrete measure $p_{-}(t)\delta_{c_{-}(t)}$ stays above the level $b_{+}$ for all $t\ge0$.
\end{proof}
\noindent The proof of Lemma \ref{lemma31} is complete.
\end{proof}
In the next lemma we briefly describe 
the
extreme case of $\tau=0$.
\begin{lemma}\label{lemma44}
The assertion of Lemma \ref{lemma31} holds when $\eta^2>4\sigma>0$, $\tau=0$, $\theta^2>0$, and
$\alpha+\sigma\beta>0$.
\end{lemma}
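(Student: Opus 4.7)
The plan is to imitate the proof of Lemma \ref{lemma31} in the degenerate case $\tau=0$. With $\tau=0$ one has $\gamma=0$, $\beta=\theta$, $\alpha=\eta+\sigma\theta$, and the assumption $1+\alpha\beta>0$ reads $1+\eta\theta+\sigma\theta^{2}>0$. Since the derivation of the Cauchy--Stieltjes transform in \eqref{gduze}--\eqref{duze-G} uses nothing specific to $\tau>0$, substituting $\tau=0$ into \eqref{duze-G} yields
\[
G_{t}(z)=\frac{\theta}{\theta z+t}+\frac{(1+2\sigma t)z+t\eta-\theta-\sqrt{[z-(\alpha+\sigma\beta)t-\theta]^{2}-4(1+\sigma t)t(1+\alpha\beta)}}{2(\sigma z^{2}+\eta z+1)(\theta z+t)}.
\]
The Stieltjes--Perron inversion formula then identifies $\pi_{t}$ as the sum of an absolutely continuous part supported on $[a_{-}(t),a_{+}(t)]$ with
\[
a_{\pm}(t)=(\alpha+\sigma\beta)t+\theta\pm 2\sqrt{(1+\sigma t)t(1+\alpha\beta)},
\]
plus possible atoms at $b_{\pm}$ and at the single point $c(t)=-t/\theta$ (the unique zero of $\theta z+t$, replacing the pair $c_{\pm}(t)$ of the $\tau>0$ case); there is no continuous singular part. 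Thus it suffices to establish analogues of Claims \ref{claimcont} and \ref{claimdisc}.

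For the continuous part, $a_{-}(t)\ge b_{+}$ on $(0,\infty)$ is proved exactly as in Claim \ref{claimcont}: $a_{-}''(t)=\sqrt{1+\alpha\beta}/(2[t(1+\sigma t)]^{3/2})>0$ so $a_{-}$ is strictly convex, its unique critical point
\[
t_{\ast}=\frac{\alpha+\sigma\beta}{2\sigma\sqrt{\eta^{2}-4\sigma}}-\frac{1}{2\sigma}
\]
is positive because $(\alpha+\sigma\beta)^{2}-(\eta^{2}-4\sigma)=4\sigma(1+\alpha\beta)>0$, and a short substitution yields $a_{-}(t_{\ast})=b_{+}$.

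For the discrete part, if $\eta\theta<0$ the location $c(t)=-t/\theta$ has sign opposite to both $b_{\pm}$, so $c(t)\notin(b_{-},b_{+})$; the remaining sign case $\eta,\theta<0$ is excluded by $\alpha+\sigma\beta=\eta+2\sigma\theta>0$. When $\eta,\theta>0$ one computes the residue of $G_{t}$ at $c(t)=-t/\theta$. The identity
\[
[z-(\alpha+\sigma\beta)t-\theta]^{2}-4(1+\sigma t)t(1+\alpha\beta)\Big|_{z=-t/\theta}=\frac{\bigl(t(1+\eta\theta)-\theta^{2}\bigr)^{2}}{\theta^{2}},
\]
obtained by writing the LHS as $(X+Y)^{2}-4XY=(X-Y)^{2}$ with $X=t(1+\alpha\beta)$ and $Y=\theta^{2}(1+\sigma t)$, gives the square root in closed form; the continuous-part estimate forces $c(t)<b_{+}\le a_{-}(t)$ whenever $c(t)\in(b_{-},b_{+})$, which pins the branch down (the square root is then negative real at $c(t)$). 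A direct calculation reduces the residue to
\[
p(t)=\frac{\bigl(\theta^{2}-t(1+\eta\theta)\bigr)_{+}}{\sigma t^{2}-\eta\theta t+\theta^{2}},
\]
with $(a)_{+}=(a+|a|)/2$, which vanishes as soon as $t\ge\theta^{2}/(1+\eta\theta)$. Since $c(t)\in(b_{-},b_{+})$ is equivalent to $t>-\theta b_{+}=2\theta/(\eta+\sqrt{\eta^{2}-4\sigma})$, it remains to verify
\[
\frac{\theta^{2}}{1+\eta\theta}<\frac{2\theta}{\eta+\sqrt{\eta^{2}-4\sigma}},
\]
which, after clearing denominators and squaring, is equivalent to $1+\alpha\beta>0$.

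The only genuinely new ingredient is the residue computation just described: one has to identify the correct branch of the square root at $c(t)$ (forced by the continuous-part bound) and then verify the elementary algebraic inequality above, so that the range on which $p(t)$ vanishes covers the entire range of $t$ for which $c(t)$ would land inside $(b_{-},b_{+})$. Everything else is a routine specialisation of the arguments of Lemma \ref{lemma31}, simplified by the fact that only one atom location $c(t)$ survives the limit $\tau\to 0^{+}$.
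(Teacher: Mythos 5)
Your proof is correct and takes essentially the same route as the paper: Claim \ref{claimcont} carries over unchanged to $\tau=0$, and the atomic part is controlled by the residue of $G_t$ at the single point $c(t)=-t/\theta$, which vanishes for $t\ge\theta^{2}/(1+\eta\theta)$, together with the inequality $\theta^{2}/(1+\eta\theta)<-\theta b_{+}$ (equivalent to $1+\alpha\beta>0$), exactly as in the paper's argument. The only cosmetic difference is an inessential constant factor between your closed form for the residue and the paper's displayed $p(t)$, which affects neither the vanishing threshold nor the conclusion.
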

\begin{proof}
The proof of Claim \ref{claimcont} carries over to the case $\tau=0$ without any changes.
Next we consider the atomic part of the measure.
Instead of two lines $c_\pm$, we have one
\[
c(t)=-\frac{t}{\theta}
\]
to take care of when examining the discrete part of $\pi_t$.
Since $\eta^2>0$ and $\theta^2>0$, 
as in the proof of Claim \ref{claimdisc} it suffices to consider the case of $\eta>0$ and $\theta>0$. The residue of $G_t$ at $c(t)$ is
\[
p(t)=\frac{\Bigl(-t\left[t(1+\eta\theta)-\theta^2\right]\Bigr)_{+}}{2\sigma\theta^2 t\left(t/\theta+b_{-}\right)\left(t/\theta+b_{+}\right)}.
\]
It follows that $p(t)>0$ for
\[
0\le t<\frac{\theta^2}{1+\eta\theta};
\]
in particular
\[
\frac{\theta^2}{1+\eta\theta}<-\theta b_{+},
\]
so the support of the discrete measure $p(t)\delta_{c(t)}$ stays above the level $b_{+}$ for all $t\ge0$.
\end{proof}

\section{Generating functions and connection coefficients}\label{s:genfunconncoeff}

Our next task is to establish an algebraic relation between the
polynomials $(p_n)_n$ and $(Q_n)_n$. In our setting, the relation
takes the same form as in \cite[Proposition 2.2]{brycwesolo2}; a
more complicated example occurs in \cite[Theorem 2.1]{BMW2}.

\begin{proposition}\label{prop41}
There exist polynomials $(b_k(x,s))_k$ and $(c_k(x,s))_k$ in variable $x$ such that $b_0(x,s)=1$ and
\begin{equation}\label{41}
Q_n(y;x,t,s)=c_n(x,s)+\sum_{k=1}^n b_{n-k}(x,s)p_k(y;t)
\end{equation}
and $(b_k)_k$ and $(c_k)_k$ do not depend on $t$ and $y$.
\end{proposition}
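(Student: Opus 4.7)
The proof plan is to proceed by induction on $n$, exploiting the key structural observation that, for $n\geq 3$, the three-term recurrence \eqref{Qn} for $Q_n$ and the recurrence for $p_n$ from Proposition~\ref{Prop free biM} share identical coefficients $B_t=\frac{(\alpha+\sigma\beta)t+\beta+\alpha\tau}{1-\sigma\tau}$ and $C_t=\frac{(t+\tau)(1+\alpha\beta)}{(1-\sigma\tau)^2}$; only the ``boundary'' recurrences at $n=1,2$ differ, and the deviations there are explicit functions of $x$ and $s$ alone. This matching is what will drive the algebra.

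The base cases $n=0,1$ are handled by taking $c_0(x,s)=b_0(x,s)=1$ and $c_1(x,s)=-x$. For $n=2$, I would solve \eqref{Q1} for $Q_2$ and the first recurrence of Proposition~\ref{Prop free biM} for $p_2$, then form $Q_2-p_2$; the coefficient of $y$ and the constant term each appear to depend on $t$, but the identity $\alpha-\sigma\beta=\eta$ (immediate from \eqref{alphabeta}) forces the $t$-dependence to cancel, yielding $b_1(x,s)=\frac{s\alpha-x}{1+\sigma s}$ and $c_2(x,s)=\frac{x\beta+s}{1+\sigma s}$, both polynomial in $x$.

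For the inductive step at $n\geq 2$, I would rewrite the appropriate $Q$-recurrence as $(1+\sigma t)Q_{n+1}=(y-B_t)Q_n-\widehat{C}_t Q_{n-1}$ (where $\widehat{C}_t=C_t$ for $n\geq 3$ and has an additional $(1+\sigma s)^{-1}$-factor for $n=2$), and substitute the inductive expansions of $Q_n$ and $Q_{n-1}$. For each summand $b_{n-k}p_k$ with $k\geq 3$, the shared bulk identity $(y-B_t)p_k=(1+\sigma t)p_{k+1}+C_t p_{k-1}$ makes the $p_{k-1}$-contributions telescope against the corresponding terms from $-C_t\sum b_{n-1-j}p_j$, and the remaining pieces assemble into the main sum $(1+\sigma t)\sum_{j=2}^{n+1}b_{n+1-j}p_j$, as required. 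The ``boundary'' contributions from $yc_n$, from $yp_1,yp_2$, and from $-B_t c_n$ and $-\widehat{C}_t c_{n-1}$ collect into a single coefficient of $p_1$ and a single constant, which upon division by $1+\sigma t$ are the candidates for $b_n(x,s)$ and $c_{n+1}(x,s)$.

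The main obstacle is verifying that these candidates are genuinely $t$-independent polynomials in $x$. Each candidate is of the form $[\text{linear polynomial in }t]/(1+\sigma t)$, and a short algebraic computation based on $\gamma=-\sigma\tau$ and $\alpha-\sigma\beta=\eta$ shows that the coefficient of $t$ in the numerator equals $\sigma$ times its constant term, so division by $1+\sigma t$ yields a polynomial in $x$ independent of $t$. Equivalently, one can phrase the requirement as a consistency identity $c_n=-\beta b_{n-1}-\frac{\tau(1+\alpha\beta)}{1-\sigma\tau}b_{n-2}$ for $n\geq 3$ (with a modified form at $n=2$, namely $c_2=-\beta b_1+\frac{s(1+\alpha\beta)}{1+\sigma s}$); the modified identity is verified at the base case by direct substitution, and is then preserved by the induction thanks to the free-harness conditions. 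Polynomial dependence on $x$ passes through the recurrence at each step, and the requirement $b_0(x,s)=1$ from the statement is built into the base case.
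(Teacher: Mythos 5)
Your route --- induction on $n$, exploiting that the recurrence \eqref{Qn} and the bulk recurrence for $(p_n)_n$ in Proposition \ref{Prop free biM} share their coefficients for $n\ge 3$ --- is genuinely different from the paper's proof, which instead derives the generating function $\widehat Q$ in closed form from \eqref{Q1}--\eqref{Qn}, sets $c_n(x,s)=Q_n(0;x,0,s)$, and exhibits the ratio \eqref{42} explicitly as a rational function of $z$ that is independent of $y$ and $t$. Your base data are correct: $b_1(x,s)=(\alpha s-x)/(1+\sigma s)$, $c_2(x,s)=(\beta x+s)/(1+\sigma s)$, the identity $c_2=-\beta b_1+s(1+\alpha\beta)/(1+\sigma s)$, and the telescoping of the $p_j$-contributions for $j\ge 2$ all check out.

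However, the inductive step as you describe it does not close. At each step \emph{two} quantities must be divisible by $1+\sigma t$: the coefficient of $p_1$, namely $c_n+b_{n-1}(\alpha t+\beta-B_t)+b_{n-2}\bigl((1-\sigma\tau)C_t-\widehat C_t\bigr)$, whose divisibility is exactly your identity $c_n=-\beta b_{n-1}-\frac{\tau(1+\alpha\beta)}{1-\sigma\tau}b_{n-2}$, and the constant term $-B_t c_n+t\,b_{n-1}-\widehat C_t c_{n-1}$, whose divisibility is a second, independent identity: $b_{n-1}=\alpha c_n+\frac{1+\alpha\beta}{1-\sigma\tau}c_{n-1}$ for $n\ge 3$ (at $n=2$ it reads $b_1=\alpha c_2-\frac{x(1+\alpha\beta)}{1+\sigma s}$, which does hold). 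You phrase ``the requirement'' as the single first identity and assert that it ``is preserved by the induction''; that assertion is precisely the nontrivial point, and it is false for the first identity in isolation. Indeed, with the newly produced coefficients $b_n=c_n-\frac{(\alpha+\sigma\beta)\tau}{1-\sigma\tau}b_{n-1}-\frac{\sigma\tau^2(1+\alpha\beta)}{(1-\sigma\tau)^2}b_{n-2}$ and $c_{n+1}=-\frac{\beta+\alpha\tau}{1-\sigma\tau}c_n-\frac{\tau(1+\alpha\beta)}{(1-\sigma\tau)^2}c_{n-1}$, the attempt to verify $c_{n+1}=-\beta b_n-\frac{\tau(1+\alpha\beta)}{1-\sigma\tau}b_{n-1}$ from the first identity alone leaves a residual proportional to $\tau(1+\alpha\beta)\bigl[(1-\sigma\tau)b_{n-1}+\alpha\tau b_{n-2}-c_{n-1}\bigr]$, and this vanishes only by invoking the second identity (combine the two and divide by $1+\alpha\beta\ne 0$). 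So the induction hypothesis must carry \eqref{41} together with \emph{both} consistency identities; once both are included they do propagate jointly (and the $n=2$ boundary versions hold), so your strategy can be completed, but as written the step defining $c_{n+1}$ is unjustified and the claimed preservation of your single identity is a genuine gap.
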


\begin{proof}
Let $\widehat{Q}$ denote the generating function of the polynomials $(Q_n)_n$, that is, let
\[
\widehat{Q}(z,y,x,t,s)=\sum_{n=0}^\infty z^n Q_n(y;x,t,s).\]
 From \eqref{41} with $y=0$ and $t=0$ we see that we must have $c_n(x,s)=Q_n(0;x,0,s)$.
 Therefore, to prove the proposition, we need only to verify that the right hand side of
\begin{equation}\label{42}
\widehat{b}(z,x,s)=\frac{\widehat{Q}(z,y,x,t,s)-\widehat{Q}(z,0,x,0,s)}{\widehat{Q}(z,y,0,t,0)-1}
\end{equation}
does not depend on variables $y$ and $t$. 
Then the series expansion $$\widehat{b}(z,x,s)=\sum_{n=0}^\infty z^n b_n(x,s)$$ defines the appropriate sequence $(b_n(x,s))_{n\geq 0}$.

To prove \eqref{42} we need an explicit formula for $\widehat{Q}$. Using the three step recurrence for $(Q_n)_n$, after a routine calculation one can verify that
\begin{multline}\label{43}
y\widehat{Q}=y Q_0+y z Q_1+z^2\frac{(t+\tau)(1+\alpha\beta)}{(1+\sigma s)(1-\sigma\tau)}Q_1+\frac{1+\sigma t}{z}\left(\widehat{Q}-Q_0-z Q_1-z^2 Q_2\right)+\\
+\frac{(\alpha+\sigma\beta)t+\beta+\alpha\tau}{1-\sigma\tau}\left(\widehat{Q}-Q_0-z Q_1\right)+
z\frac{(t+\tau)(1+\alpha\beta)}{(1-\sigma\tau)^2}\left(\widehat{Q}-Q_0-zQ_1\right)
\end{multline}
(to save space, we dropped the arguments $(z,y,x,t,s)$ in $\widehat{Q}$ and $(y;x,t,s)$ in $Q_n$). From this equation, after an elementary, but lengthy algebra, we first obtain a formula for $\widehat{Q}$ as a rational function of $z$, and then verify that \eqref{42} holds true with
\begin{multline*}
\widehat{b}(z,x,s)=\frac{1}{1+\sigma s}\cdot\\\Biggl(
\frac{z^2(1+\alpha\beta)s+z(1-\sigma\tau)\left[s(\alpha+\sigma\beta)-x(1-\sigma\tau)\right]+\sigma s(1-\sigma\tau)^2}{z^2\tau(1+\alpha\beta)+z(\beta+\alpha\tau)(1-\sigma\tau)+(1-\sigma\tau)^2}+1\Biggr).
\end{multline*}
Since $\widehat{b}(0,x,s)=1$, we get $b_0(x,s)=1$, as claimed.
\end{proof}

\section{Quadratic harness property}\label{s:quadharnprop}

In \cite{BMW1} we developed an operator approach, related to Lie algebra techniques, to the verification of the quadratic harness property. It uses a representation of the process under investigation through an operator $\XX_t=\xx+t\yy$, where $\xx$ and $\yy$ are some operators built from some compositions of the $q$-differentiation operator $\DD_q$ and the multiplication operator $\ZZ$.

Here we show how to exploit this technique to prove the quadratic harness property of the Markov process with martingale polynomials given in Proposition \ref{Prop free biM}. Let
\begin{equation}\label{analog434}
Q_{t,s,u}^\ast(\xx,\yy)=A_{t,s,u}\xx^2+B_{t,s,u}\yy\xx+C_{t,s,u}\yy^2+D_{t,s,u}\xx+E_{t,s,u}\yy+F_{t,s,u}
\end{equation}
be the quadratic form in the non-commuting variables $\xx$, $\yy$ (a dual of \eqref{quadform}). Define the generating function of the polynomials $(p_n)_n$ as $$\widehat{p}_t(z,y)=\sum_{n=0}^\infty z^n p_n(y;t).$$ In the free harness case, we are going to use the $0$-differentiation operator $\DD$, skipping the subscript, so
\[
\DD(g)(z)=\frac{g(z)-g(0)}{z}\ \textrm{ and }\ \ZZ(g)(z)=zg(z)
\]
(we treat them as the linear operators on formal series $g(z)$ in the variable $z$).

\begin{proposition}\label{analog410}
Let $\gamma=-\sigma\tau$ and
\begin{eqnarray*}
\xxx&=&\DDD+\beta\ZZZ\DDD+\frac{\tau(1+\alpha\beta)}{1-\sigma\tau}\ZZZ^2\DDD+\frac{\tau(\alpha+\sigma\beta)}{1-\sigma\tau}\ZZZ^2\DDD^2+\frac{\sigma\tau^2(1+\alpha\beta)}{(1-\sigma\tau)^2}{\ZZZ}^3{\DDD}^2,\\
\yyy&=&\ZZZ+\alpha\ZZZ\DDD+\sigma\ZZZ\DDD^2+\frac{\sigma(\beta+\alpha\tau)}{1-\sigma\tau}\ZZZ^2\DDD^2+\frac{\alpha\beta+\sigma\tau}{1-\sigma\tau}\ZZZ^2\DDD+
\frac{\sigma\tau(1+\alpha\beta)}{(1-\sigma\tau)^2}\ZZZ^3\DDD^2.
\end{eqnarray*}
The operator $\XXX_t=\xxx+t\yyy$ satisfies
\begin{equation}\label{analog435}
\XXX_t^2=Q^\ast_{t,s,u}(\XXX_s,\XXX_u)\quad\forall\ s<t<u,
\end{equation}
with the quadratic form given by \eqref{analog434} and \eqref{AtsuFtsu}. Moreover,
\begin{equation}\label{analog439}
y \widehat{p}_t(z,y)=\left(\XXX_t {{\widehat{p}}_t}\right)(z,y).
\end{equation}
\end{proposition}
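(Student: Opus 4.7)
My plan is to prove \eqref{analog439} and \eqref{analog435} separately; both are operator-algebraic computations, but of rather different flavours.

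For the generating-function identity \eqref{analog439}, I would substitute $\widehat{p}_t(z,y)=\sum_{n\geq 0}z^n p_n(y;t)$ into both sides and compute $\XXX_t\widehat{p}_t$ monomial by monomial. Since $\DDD$ shifts the coefficient sequence $(p_n(y;t))$ down by one and $\ZZZ$ shifts it up by one (with truncation at small~$n$), each word $\ZZZ^a\DDD^b$ occurring in $\xxx+t\yyy$ contributes to the coefficient of $z^n$ in $\XXX_t\widehat{p}_t$ a known scalar multiple of $p_{n+b-a}(y;t)$. Collecting all such contributions and comparing with the coefficient of $z^n$ in $y\widehat{p}_t$ reduces \eqref{analog439} to a finite collection of scalar identities, which are then read off from the three-term recurrences in Proposition \ref{Prop free biM}. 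For $n\geq 3$ only the leading pieces $\DDD$, $\ZZZ$, $\alpha\ZZZ\DDD$, $\beta\ZZZ\DDD$ and $\sigma\ZZZ\DDD^2$ matter, and the identities are immediate; the higher-order monomials $\ZZZ^2\DDD$, $\ZZZ^2\DDD^2$ and $\ZZZ^3\DDD^2$ in $\xxx$ and $\yyy$ are precisely the corrections needed to reproduce the different $n=1$ and $n=2$ coefficients in Proposition \ref{Prop free biM}.

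For the quadratic identity \eqref{analog435}, I would reduce both sides to a common normal form and compare. Expanding $\XXX_t^2=\xxx^2+t(\xxx\yyy+\yyy\xxx)+t^2\yyy^2$ and substituting $\XXX_s=\xxx+s\yyy$, $\XXX_u=\xxx+u\yyy$ into the right-hand side together with \eqref{AtsuFtsu}, the identity becomes a relation among the seven noncommuting objects $\xxx^2$, $\xxx\yyy$, $\yyy\xxx$, $\yyy^2$, $\xxx$, $\yyy$, $I$ whose scalar coefficients are rational functions of $s,t,u$. Each of these objects can be reduced to a fixed normal form $\sum c_{p,q}\ZZZ^p\DDD^q$ using the single identity $\DDD\ZZZ=I$, whence $\DDD^b\ZZZ^c$ collapses to $\ZZZ^{c-b}$ when $c\geq b$ and to $\DDD^{b-c}$ when $b\geq c$. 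The identity \eqref{analog435} then amounts to a finite collection of polynomial identities in $s,t,u$, one for each normal-form monomial $\ZZZ^p\DDD^q$. The identifications $\eta=\alpha-\sigma\beta$ and $\theta=\beta-\tau\alpha$ (immediate from \eqref{alphabeta}) will be used throughout to express \eqref{AtsuFtsu} in the parameters $\alpha,\beta,\sigma,\tau$.

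The main obstacle is the algebraic bulk of the second step. Each of $\xxx$ and $\yyy$ has five monomial summands, so $\xxx^2$, $\xxx\yyy$, $\yyy\xxx$ and $\yyy^2$ collectively involve on the order of one hundred operator words to be renormalised. A workable approach is to (i) compute the normal forms of $\xxx^2$, $\xxx\yyy+\yyy\xxx$ and $\yyy^2$ once and for all, (ii) factor the common denominator $(u-s)[u(1+\sigma s)+\tau-\gamma s]$ out of the coefficients in \eqref{AtsuFtsu} and form both sides of \eqref{analog435} as polynomials in $s,t,u$ with coefficients in those normal forms, and (iii) match coefficients word by word. The defining free-harness relation $\gamma=-\sigma\tau$ is expected to produce systematic cancellations that make the top-degree monomials collapse and leave only the identities that match \eqref{AtsuFtsu}.
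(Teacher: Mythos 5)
Your handling of \eqref{analog439} is in substance the paper's: there it is dismissed as a ``routine calculation'' from the recurrences of Proposition \ref{Prop free biM}, i.e.\ exactly the coefficient-of-$z^n$ comparison you outline. One correction to your bookkeeping, though: since $\ZZZ^a\DDD^b$ applied to $\widehat{p}_t$ contributes $p_{n+b-a}$ to the coefficient of $z^n$ only when $n\ge a$, the truncation works the opposite way from what you state. The words $\ZZZ^2\DDD$, $\ZZZ^2\DDD^2$, $\ZZZ^3\DDD^2$ are \emph{active} for all $n\ge 3$ and are needed there to produce the generic coefficients $\frac{(\alpha+\sigma\beta)t+\beta+\alpha\tau}{1-\sigma\tau}$ and $\frac{(t+\tau)(1+\alpha\beta)}{(1-\sigma\tau)^2}$, while the exceptional $n=1$ and $n=2$ lines of Proposition \ref{Prop free biM} arise precisely because those words are killed at small $n$. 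Checking only $\DDD$, $\ZZZ$, $\alpha\ZZZ\DDD$, $\beta\ZZZ\DDD$, $\sigma\ZZZ\DDD^2$ for $n\ge 3$ would give $(1+\sigma t)p_{n+1}+(\alpha t+\beta)p_n+tp_{n-1}$ and a spurious mismatch, so carry all monomials through the generic case.

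For \eqref{analog435} you take a genuinely different route. The paper never expands $\XXX_t^2$ and $Q^\ast_{t,s,u}(\XXX_s,\XXX_u)$: it verifies by one direct computation that $\xxx,\yyy$ satisfy the commutation relation $[\xxx,\yyy]_{\gamma}=\sigma\xxx^2+\tau\yyy^2+\eta\xxx+\theta\yyy+\I$ with $\gamma=-\sigma\tau$, and then quotes \cite[Proposition 4.9]{BMW1}, which says that this single relation forces \eqref{analog435} with the coefficients \eqref{AtsuFtsu} for all $s<t<u$. Your plan --- reduce every word to the normal form $\ZZZ^p\DDD^q$ via $\DDD\ZZZ=\I$ and match coefficients as polynomials in $s,t,u$ --- is sound, because the monomials $\ZZZ^p\DDD^q$ act linearly independently on formal series; it is in effect a self-contained re-proof of the cited lemma in this special case. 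Two points to keep straight if you carry it out: the seven objects $\xxx^2,\xxx\yyy,\yyy\xxx,\yyy^2,\xxx,\yyy,\I$ are \emph{not} linearly independent (the commutation relation above is exactly a linear relation among them), so the matching must indeed be done at the $\ZZZ^p\DDD^q$ level, as you propose, and not at the level of those seven objects; and the mixed term in \eqref{analog434} evaluated at $(\XXX_s,\XXX_u)$ is $B_{t,s,u}\XXX_u\XXX_s$, with the later-time operator on the left. What the paper's route buys is that the only hand computation is $(s,t,u)$-free, the rational-function bookkeeping of \eqref{AtsuFtsu} being absorbed once and for all by the cited lemma; what yours buys is independence from \cite{BMW1}, at the price of carrying those $(s,t,u)$-dependent coefficients through on the order of a hundred normal-form words. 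Your identifications $\eta=\alpha-\sigma\beta$ and $\theta=\beta-\tau\alpha$ are correct and are the right change of parameters.
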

\begin{proof}
A long but straightforward calculation shows that $\xx$ and $\yy$ satisfy the dual version of the $q$-commutation equation
\begin{equation}
[\xx,\yy]_\gamma=\sigma\xx^2+\tau\yy^2+\eta\xx+\theta\yy+\I.
\end{equation}
By \cite[Proposition 4.9]{BMW1}, \eqref{analog435} holds. The algebraic identity \eqref{analog439} follows from the three step recurrences for the polynomials $(p_n)_n$ given in Proposition \ref{Prop free biM}, by another routine calculation.
\end{proof}

\begin{proposition}\label{prop51}
If $(X_t)_t$ is a Markov process such that the random variables $X_t$ have moments of all orders and $(p_n)_n$ are orthogonal martingale polynomials of the process $(X_t)_t$, then $(X_t)_t$ is a quadratic harness with $\gamma=-\sigma\tau$.
\end{proposition}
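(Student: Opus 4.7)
The plan is to deduce the three harness properties---covariance structure, linear regression, and quadratic conditional variance---from the operator representation of Proposition~\ref{analog410}, along the lines of the scheme used in \cite{BMW1} and \cite{brycwesolo2}.

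First, I would establish the covariance structure \eqref{covariances}. Since by hypothesis $X_t = p_1(X_t;t)$ is a martingale, one immediately gets $\wo[X_t]=0$ and $\wwo{X_t}{\Fs}=X_s$ for $0\le s\le t$. Taking the expectation of the first three-term recurrence of Proposition~\ref{Prop free biM} against the law of $X_t$, and using the orthogonality relations $\wo[p_n(X_t;t)]=0$ for $n\ge1$, yields $\wo[X_t^2]=t$; combined with the martingale property, this gives $\wo[X_sX_t]=s\wedge t$.

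Next comes the harness property proper. Identity \eqref{analog439} exhibits $\widehat{p}_t$ as a formal eigenvector of $\XXX_t$ with eigenvalue $y$. I would use this, together with the martingale property $\wwo{p_n(X_t;t)}{\Fs}=p_n(X_s;s)$, to identify joint moments of the form $\wo[p_{n_1}(X_{t_1};t_1)\cdots p_{n_m}(X_{t_m};t_m)]$, for $t_1\le\cdots\le t_m$, with vacuum expectations of the corresponding non-commutative monomials $\XXX_{t_1}^{n_1}\cdots\XXX_{t_m}^{n_m}$ applied to the constant series. Substituting the operator identity \eqref{analog435} into such a monomial then gives, for all $s<t<u$ and all polynomials $\Phi$, $\Psi$ in the variables $\{X_r:r\le s\}$ and $\{X_r:r\ge u\}$ respectively,
\[
\wo[\Phi\, X_t^2\,\Psi]=\wo[\Phi\, Q_{t,s,u}(X_s,X_u)\,\Psi],
\]
with $Q_{t,s,u}$ as in \eqref{quadform}--\eqref{AtsuFtsu}. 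Invoking the Markov hypothesis, so that $\wwo{X_t^2}{\Fsu}=\wwo{X_t^2}{X_s,X_u}$, this identifies $\wwo{X_t^2}{X_s,X_u}$ with $Q_{t,s,u}(X_s,X_u)$, which is \eqref{condVar}. The linear regression \eqref{linreg} follows by the same device applied to the trivial affine identity $\XXX_t=\tfrac{u-t}{u-s}\XXX_s+\tfrac{t-s}{u-s}\XXX_u$, which holds because $\XXX_r=\xxx+r\yyy$ is affine in $r$.

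The main obstacle is the faithful passage from the non-commutative operator identity \eqref{analog435} to the probabilistic moment identity displayed above, which requires careful bookkeeping of how ordered monomials in the $\XXX_r$'s encode joint moments of the process in a manner compatible with the Markov structure and the martingale hypothesis on $(p_n)_n$. This is precisely the role played by the general machinery of \cite[Section~4]{BMW1}, which applies in our setting once the explicit representation of Proposition~\ref{analog410} and the underlying commutation relation are in place.
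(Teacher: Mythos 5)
Your strategy is the paper's own---direct computation from the first recurrence and orthogonality for \eqref{covariances}, then the operator representation of Proposition \ref{analog410} for \eqref{linreg} and \eqref{quadvar}---but the decisive step is exactly the one you do not carry out, and the dictionary you sketch for it is not correct. The only machinery the paper imports from \cite{BMW1} is Proposition 4.9, which yields the purely algebraic identity \eqref{analog435}; the transfer from that identity to conditional moments of the process is the actual content of Proposition \ref{prop51}, and the paper proves it explicitly rather than quoting it. Moreover, what the operator calculus computes are not expectations of products of the polynomials, $\wo[p_{n_1}(X_{t_1};t_1)\cdots p_{n_m}(X_{t_m};t_m)]$ (already for one factor this fails: $\wo[p_2(X_t;t)]=0$, while the constant term of $\XXX_t^2\,1$ is $\wo[X_t^2]=t$), but time-ordered moments of the process sandwiched between generating functions at the two extreme times: using \eqref{analog439} and the martingale property one shows, for $s\le t\le u$,
\begin{equation*}
\wo\bigl[\widehat{p}_s(\zeta,X_s)\,X_t\,\widehat{p}_u(\xi,X_u)\bigr]=\XXX_t\,G_s(\zeta,\xi),\qquad
\wo\bigl[\widehat{p}_s(\zeta,X_s)\,X_sX_u\,\widehat{p}_u(\xi,X_u)\bigr]=\XXX_u\XXX_s\,G_s(\zeta,\xi),
\end{equation*}
with $G_s(\zeta,\xi)=\wo[\widehat{p}_s(\zeta,X_s)\widehat{p}_s(\xi,X_s)]$, the operators acting on the variable $\xi$ and the operator of the \emph{later} time standing on the left. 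Note the ordering: with your convention ($\XXX_{t_1}^{n_1}\cdots\XXX_{t_m}^{n_m}$, earliest time leftmost, applied to the constant series) the constant term of $\XXX_s\XXX_t 1$ equals $t$, not $\wo[X_sX_t]=s$. Once the displayed identities, together with $\wo[\widehat{p}_s(\zeta,X_s)X_v^2\widehat{p}_u(\xi,X_u)]=\XXX_v^2G_s(\zeta,\xi)$ for $v\in[s,u]$, are in place, \eqref{linreg} and \eqref{quadvar} do follow by applying $\XXX_t=a_{t,s,u}\XXX_s+b_{t,s,u}\XXX_u$ and \eqref{analog435} to $G_s$; this is the paper's route via \eqref{analog441} and \eqref{analog443}, and it is bookkeeping that must be done, not delegated.

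Two further points. First, even after the moment identities against polynomials in $(X_s,X_u)$ are established, passing to the conditional statements requires polynomials to be dense in $L^2(X_s,X_u)$; the paper secures this because the recurrence coefficients are bounded in $n$, so the orthogonality measures are compactly supported and the $X_t$ are bounded, and only then does the Markov property (which you do invoke) finish the argument. With bare ``moments of all orders'' this density is not automatic, so the step needs to be addressed. Second, a small slip: $\wo[X_t]=0$ does not follow from the martingale property alone but from orthogonality of $p_1$ to $p_0$; your derivation of $\wo[X_t^2]=t$ from the first recurrence and then $\wo[X_sX_t]=s$ via the martingale property is fine and matches the paper's computation.
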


\begin{proof}
Condition \eqref{covariances} holds true. Indeed,
\[
\wo X_t=\wo\left[p_1(X_t;t)p_0(X_t;t)\right]=0.
\]
For $s<t$, by the martingale property \eqref{op} and the first recurrence in 
Proposition \ref{Prop free biM} we get
\begin{multline*}
\wo[X_s X_t]=\wo\left[X_s\wwo{p_1(X_t;t)}{\Fs}\right]=
\wo\left[X_s p_1(X_s;s)\right]=\\
\wo\left[ (1+\sigma s)p_2(X_s;s)+(\alpha s+\beta)p_1(X_s;s)+s p_0(X_s;s)\right]=s.
\end{multline*}

An efficient way to verify \eqref{linreg} and \eqref{quadvar} is to use \eqref{analog439} to represent the process through the operator $\XX_t$ from Proposition \ref{analog410} as
\[
X_t {{\widehat{p}}_t}(z,X_t)=\XX_t\left({{\widehat{p}}_t}(z,X_t)\right).
\]
This, together with the martingale polynomial property, which for the generating function $\widehat{p}_t$ implies 
\[
\wwo{{\widehat{p}_t}(\xi,X_t)}{X_s}={\widehat{p}_s}(\xi,X_s),
\]
gives for $s\le t\le u$
\begin{multline*}
\wo\left({{\widehat{p}}_s}(\zeta,X_s)X_t{{\widehat{p}}_u}(\xi,X_u)\right)=
\wo\left({\widehat{p}_s}(\zeta,X_s)X_t{\widehat{p}_t}(\xi,X_t)\right)=\\
\XX_t\wo\left({\widehat{p}_s}(\zeta,X_s){\widehat{p}_s}(\xi,X_s)\right)=
\XX_t G_s(\zeta,\xi),
\end{multline*}
where
\[
G_s(\zeta,\xi)=\wo\left({\widehat{p}_s}(\zeta,X_s){\widehat{p}_s}(\xi,X_s)\right)=\sum_{n=0}^\infty(\zeta\xi)^n\wo\left(p_n(X_s;s)\right)^2,
\]
and $\XX_t$ acts on $G_s(\zeta,\xi)$ as on a series in variable $\xi$. Thus we arrive at the equivalence of
\begin{multline}\label{analog441}
\wo\left({{\widehat{p}}_s}(\zeta,X_s)X_t{{\widehat{p}}_u}(\xi,X_u)\right)=
a_{t,s,u}\wo\left({{\widehat{p}}_s}(\zeta,X_s)X_s
{{\widehat{p}}_u}(\xi,X_u)\right)
\\+
b_{t,s,u}\wo\left({{\widehat{p}}_s}(\zeta,X_s)X_u{{\widehat{p}}_u}(\xi,X_u)\right)
\end{multline}
and
\[
\XX_t G_s(\zeta,\xi)=a_{t,s,u}\XX_s G_s(\zeta,\xi)+b_{t,s,u}\XX_u G_s(\zeta,\xi).
\]
The latter (and so \eqref{analog441}) follows from the 
operator identity $\XX_t=a_{t,s,u}\XX_s+b_{t,s,u}\XX_u$, 
which is a trivial consequence of the representation $\XXX_t=\xxx+t\yyy$, $t>0$.

Now \eqref{analog441} means that
 \begin{multline*}
  \wo\left({{{p}}_n}(X_s;s)X_t{{{p}}_m}(X_u;u)\right)\\=
a_{t,s,u}\wo\left({{{p}}_n}(X_s;s)X_s{{{p}}_m}(X_u;u)\right)+
b_{t,s,u}\wo\left({{{p}}_n}(X_s;s)X_u{{{p}}_m}(X_u;u)\right)
\end{multline*}
for all $m,n\ge 0$. Since the random variables $X_t$ are bounded, polynomials are dense in $L^2(X_s,X_u)$ (see \cite[Theorem 3.1.18]{DunklXu}). Thus by the fact that $(X_t)_t$ is Markov,  \eqref{linreg} follows from \eqref{analog441}.

The proof of \eqref{quadvar} is similar, since \eqref{quadvar} is equivalent to
\begin{multline}\label{analog443}
\wo\left({\widehat{p}_s}(\zeta,X_s)X_t^2{\widehat{p}_u}(\xi,X_u)\right)=
A_{t,s,u}\wo\left({\widehat{p}_s}(\zeta,X_s)X_s^2{\widehat{p}_s}(\xi,X_s)\right)\\+
B_{t,s,u}\wo\left({\widehat{p}_s}(\zeta,X_s)X_s X_u{\widehat{p}_u}(\xi,X_u)\right)+
C_{t,s,u}\wo\left({\widehat{p}_s}(\zeta,X_s)X_u^2{\widehat{p}_u}(\xi,X_u)\right)\\+
D_{t,s,u}\wo\left({\widehat{p}_s}(\zeta,X_s)X_s{\widehat{p}_s}(\xi,X_s)\right)+
E_{t,s,u}\wo\left({\widehat{p}_s}(\zeta,X_s)X_u{\widehat{p}_u}(\xi,X_u)\right)\\+
F_{t,s,u}\wo\left({\widehat{p}_s}(\zeta,X_s){\widehat{p}_s}(\xi,X_s)\right).
\end{multline}
Observe that if $s\le u$ then 
\begin{multline}\label{abcd}
\wo\left({\widehat{p}_s(\zeta,X_s)X_sX_u{\widehat{p}}_u(\xi,X_u)}\right)=\wo\left({\widehat{p}_s(\zeta,X_s)X_s\XX_u{\widehat{p}}_u(\xi,X_u)}\right)=\\
\XX_u\wo\left({\widehat{p}_s(\zeta,X_s)X_s{\widehat{p}}_s(\xi,X_s)}\right)=
\XX_u\XX_s\wo\left({\widehat{p}_s(\zeta,X_s){\widehat{p}}_s(\xi,X_s)}\right)=\XX_u\XX_s G_s(\zeta,\xi).
\end{multline}
Similarly, $\XX_v^2 G_s(\zeta,\xi)=\wo\left({\widehat{p}_s}(\zeta,X_s)X_v^2{\widehat{p}_u}(\xi,X_u)\right)$ for $v\in[s,u]$. This and
\eqref{abcd} show that \eqref{analog443} follows from the operator identity \eqref{analog435} applied to $G_s(\zeta,\xi)$ treated as a formal power series in variable $\xi$, proving \eqref{quadvar}.

\end{proof}

\section{Construction and uniqueness}\label{s:constr}
Now we are in a position to prove the main result of this paper.
\begin{proof}[Proof of Theorem \ref{main}]
We first note that  since the time
inversion $(t X_{1/t})_t$ of the quadratic harness $(X_t)_t$ is
still a quadratic harness with parameters $\eta$, $\sigma$
replaced by $\theta$, $\tau$ (see Remark 2.1 in \cite{BMW1}),
it does not matter whether we construct $(X_t)_{t>0}$, or its time inversion $(t
X_{1/t})_{t>0}$. Secondly, we note that
  if $\eta^2>4\sigma>0$ and $\theta^2>4\tau>0$ then it is impossible to have simultaneously $\alpha+\sigma\beta=0$ and $\beta+\alpha\tau=0$  (recall \eqref{alphabeta}). So passing to time inversion if necessary, we may assume that $\alpha+\sigma\beta\ne 0$, and passing to $(-X_t)$ if necessary, we may assume, $\alpha+\sigma\beta> 0$.
  Similarly, observe that if $\sigma>0$, $\tau=0$ and $\eta^2>4\sigma$ then $\alpha+\sigma\beta\ne0$. Indeed, if $\tau=0$ and $\alpha+\sigma\beta=0$ then $\eta=-2\sigma\theta$; $\eta^2>4\sigma$ implies $\sigma\theta^2>1$ while $1+\alpha\beta>0$ implies $\sigma\theta^2<1$ - a contradiction. Hence we may assume $\alpha+\sigma\beta> 0$ as before.
Therefore, without loss of generality, we will consider the following list of constraints for the parameters for which we want to construct the quadratic harness:
\begin{itemize}
\item  Case 1: $\sigma,\tau>0$ and  $\eta^2\le4\sigma$,
\item  Case 2: $\sigma,\tau>0$ and  $\eta^2>4\sigma$, $\theta^2>4\tau$, and $\alpha+\sigma\beta> 0$,
\item Case 3: $\sigma>0$, $\tau=0$, $\eta^2\le4\sigma$,
\item Case 4: $\sigma>0$, $\tau=0$, $\eta^2>4\sigma$,  and $\alpha+\sigma\beta> 0$,
\item Case 5: $\sigma>0$, $\tau=\theta=0$,
\item Case 6: $\sigma=\tau=0$.
\end{itemize}

 We omit Cases 5 and 6, as the full construction of the quadratic harness with  $\sigma=\tau=0$ appeared in \cite{brycwesolo2} and the case $\tau=\theta=0$ is 
 the time-inversion of \cite[Theorem 4.3]{brycwesolo1}.
In the remaining cases, we will use
 polynomials $(p_n)_n=(p_n(y;t))_n$ from 
Proposition \ref{Prop free biM} to determine measures $\pi_t$ which will be the univariate laws of $(X_t)$. The orthogonality  measures $\mathbf{P}_{s,t}(x,\textrm{d}y)$  of  the   polynomials $(Q_n)_n=(Q_n(y;x,t,s))_n$ from Proposition \ref{QnFreeHarn} will be the transition probabilities of $(X_t)$ , i.e. the conditional laws $\mathcal{L}(X_t|X_s=x)$. We will verify that these probabilities satisfy the Chapman--Kol\-mo\-go\-rov equation, so that $(X_t)_t$ is indeed a well defined  Markov process.

It is clear that the coefficients at $Q_1$ in \eqref{Q2} and $Q_{n-1}$ at \eqref{Qn} are nonnegative.
So by Favard's theorem, in order  to define  probability measure  $\mathbf{P}_{s,t}(x,\textrm{d}y)$, we only need to check that the coefficient at $Q_0$ in \eqref{Q1} is nonnegative for $x$ from the support of the measure $\pi_s$.

 The coefficient at $Q_0$
is obviously nonnegative in Cases 1 and 3, as $\eta^2\le4\sigma $.
By Lemma \ref{lemma31}, the coefficient at $Q_0$ in \eqref{Q1} is
nonnegative in Case 2.
By Lemma \ref{lemma44} the coefficient at $Q_0$ in \eqref{Q1} is nonnegative in Case 4.


Thus, in 
each case, the polynomials $(Q_n(y;x,t,s))_n$ determine the probability measures $\mathbf{P}_{s,t}(x,\textrm{d}y)$ for all $x\in\supp\pi_s$. Observe that both families of measures $(\pi_t)_t$ and $(\mathbf{P}_{s,t}(x,\textrm{d}y))_{s,t,x}$ are 
compactly supported and uniquely determined, as the coefficients of the three step recurrences \eqref{Q1}-\eqref{Qn} are bounded 
in $n$. 

We now verify that the probability measures $(\mathbf{P}_{s,t}(x,\textrm{d}y))$ are the transition probabilities of a Markov process. To do so, notice that \eqref{41} for $n\ge1$ implies
\begin{equation}\label{62}
Q_n(y;x,t,s)=\sum_{k=1}^n b_{n-k}(x,s)\left[p_k(y;t)-p_k(x;s)\right]\quad\forall\ x,y\in\rzecz.
\end{equation}
(Observe that $Q_n(x;x,s,s)=0$  as a consequence of \eqref{Q1}-\eqref{Qn}.)
Since $b_0\equiv1$ and $p_0\equiv1$, a recursive use of \eqref{62} yields
\begin{equation}\label{63}
\int_\rzecz p_n(y;t)\mathbf{P}_{s,t}(x,\textrm{d}y)=p_n(x;s)\quad\forall\ x\in\supp\pi_s.
\end{equation}
Let
\[
U=\begin{cases}
\rzecz\setminus\left(\frac{-\eta-\sqrt{\eta^2-4\sigma}}{2\sigma},\frac{-\eta+\sqrt{\eta^2-4\sigma}}{2\sigma}\right),&
\textrm{when }\sigma>0,\\
\rzecz\setminus(-\eta^{-1},\infty),& \textrm{when }
\sigma=0\textrm{ and }\eta<0,\\
\rzecz\setminus(-\infty,\eta^{-1}),& \textrm{when }
\sigma=0\textrm{ and }\eta>0,\\
\rzecz,& \textrm{when } \sigma=0=\eta.
\end{cases}
\]
We proceed to show that for $0\le s<t<u$ and 
for a set of $x$ of
$\pi_s$-measure one
\begin{equation}\label{61}
\mathbf{P}_{s,u}(x,\cdot)=\int_U \mathbf{P}_{t,u}(y,\cdot)\mathbf{P}_{s,t}(x,\textrm{d}y).
\end{equation}
First, consider the special case $s=x=0$ of \eqref{61}, which we state equivalently as
\begin{equation}\label{65}
\pi_u(\cdot)=\int_U \mathbf{P}_{t,u}(y,\cdot)\pi_t(\textrm{d}y).
\end{equation}
Define $\nu(A)=\int_U \mathbf{P}_{t,u}(y,A)\pi_t(\textrm{d}y)$. To prove that $\nu(\textrm{d}z)=\pi_u(\textrm{d}z)$, we only need to show that the polynomials $Q_n(z;0,u,0)=p_n(z;u)$ are orthogonal with respect to $\nu(\textrm{d}z)$. Since the argument is analogous to the one developed in the general case below, we omit it.

From the fact that \eqref{61} holds for $s=x=0$, we deduce that
\begin{equation}\label{64}
\mathbf{P}_{s,t}(x,U)=1\quad\forall\ x\in\supp\pi_s.
\end{equation}
Indeed, observe first that since the coefficients in the three
step recurrences \eqref{Q1}-\eqref{Qn} depend continuously on $x$,
the same is true for the Cauchy--Stieltjes 
transforms of measures
$\mathbf{P}_{s,t}(x,\textrm{d}y)$, 
which take form \eqref{duze-G} with parameters that  depend on $s,x$, see \eqref{iniqn}.
So   $U\ni x\mapsto\mathbf{P}_{s,t}(x,U)$ is a continuous
function. Then, Lemma \ref{lemma31} and \eqref{65} imply that
\[
1=\pi_t(U)=\int_\rzecz \mathbf{P}_{s,t}(x,U)\pi_s(\textrm{d}x). 
\]
Therefore $\mathbf{P}_{s,t}(x,U)=1$ on a set of $x$ of
$\pi_s$-probability one. By continuity of $\mathbf{P}_{s,t}(x,U)$
in $x$, the conclusion follows for all $x\in\supp\pi_s$.

We now prove that \eqref{61} holds in general. Fix 
$s>0$ and
$x\in\supp\pi_s$, and let $\nu(\cdot)=\int_U \mathbf{P}_{t,u}(y,\cdot)\mathbf{P}_{s,t}(x,\textrm{d}y)$. We will show that $\nu(\textrm{d}z)=\mathbf{P}_{s,u}(x,\textrm{d}z)$ by checking that the polynomials $(Q_n(z;x,u,s))_n$ are orthogonal with respect to $\nu(\textrm{d}z)$. Since $\nu$ is a probability measure and $(Q_n)_n$ satisfy a three step recurrence with bounded coefficients, to verify that $\nu(\textrm{d}z)$ coincides with $\mathbf{P}_{s,t}(x,\textrm{d}z)$, it suffices to prove that $\nu(\textrm{d}z)$ integrates $Q_n(z;x,u,s)$ to zero when $n\ge1$. Using consecutively \eqref{62}, \eqref{63}, \eqref{64}, again \eqref{62}, and the fact that $\int_\rzecz Q_n(y;x,t,s)\mathbf{P}_{s,t}(x,\textrm{d}y)=0$ for $n\ge1$,  we get
\begin{multline*}
\int_\rzecz Q_n(z;x,u,s)\nu(\textrm{d}z)\\
=\int_U \sum_{k=1}^n b_{n-k}(x,s)\int_\rzecz\left[p_k(z;u)-p_k(x;s)\right] \mathbf{P}_{t,u}(y,\textrm{d}z)\mathbf{P}_{s,t}(x,\textrm{d}y)\\
=\sum_{k=1}^n b_{n-k}(x,s)\int_U\left[p_k(y;t)-p_k(x;s)\right]\mathbf{P}_{s,t}(x,\textrm{d}y)\\
=\sum_{k=1}^n b_{n-k}(x,s)\int_\rzecz\left[p_k(y;t)-p_k(x;s)\right]\mathbf{P}_{s,t}(x,\textrm{d}y)\\
=\int_\rzecz Q_n(y;x,t,s)\mathbf{P}_{s,t}(x,\textrm{d}y)=0.
\end{multline*}
Thus \eqref{61} holds and $\mathbf{P}_{s,t}(x,\textrm{d}y)$ are transition probabilities of a Markov process $(X_t)_t$ with state space $U$. Since $p_n(y;t)=Q_n(y;0,t,0)$ it follows from the construction that for fixed $t>0$ 
polynomials $(p_n(y;t))_n$ are orthogonal with respect to $\pi_t(\textrm{d}y)=\mathbf{P}_{0,t}(0,\textrm{d}y)$; their martingale polynomial property follows from \eqref{63}. Proposition \ref{prop51} implies that $(X_t)_t$ is a quadratic harness with parameters $\eta$, $\theta$, $\sigma$, $\tau$ and $\gamma$, with $\gamma=-\sigma\tau$.

Uniqueness of the process $(X_t)_t$ follows from the fact that orthogonal martingale polynomials $(p_n)_n$ determine uniquely the joint moments of the process. Recall that the  measures $\pi_t$ are compactly supported, so the joint moments determine the finite dimensional distributions of the process uniquely.
\end{proof}

\subsection*{Acknowledgement} The authors thank M. Bo{\.z}ejko and K. Szpojankowski for helpful discussions.
The research of W.~B. was partially supported by NSF  grant  DMS-0904720 and by Taft Research Center at the University of Cincinnati.

\bibliographystyle{amsplain}
\bibliography{c:/w/wm/res/var/toolbox/WM}

\end{document}